\documentclass{article}
\usepackage{amsmath,amssymb,amsthm, tikz-cd, bbm}
\usepackage[shortlabels]{enumitem}
\usepackage{graphicx}
\usepackage[margin=1in]{geometry}
\usepackage{MnSymbol, url}

\newtheorem{theorem}{Theorem}[section]
\newtheorem*{theorem*}{Theorem}

\newtheorem*{corollary*}{Corollary}
\newtheorem{lemma}[theorem]{Lemma}
\newtheorem{proposition}[theorem]{Proposition}
\newtheorem*{thmdef*}{Theorem/Definition}

\theoremstyle{definition}
\newtheorem{example}[theorem]{Example}
\newtheorem{definition}[theorem]{Definition}
\newtheorem*{definition*}{Definition}

\newtheoremstyle{iremark}
{\topsep}   
{\topsep}   
{\upshape}  
{0pt}       
{\itshape}  
{.}         
{5pt plus 1pt minus 1pt} 
{\thmname{#1}\thmnumber{ \itshape#2}\thmnote{ (#3)}} 

\theoremstyle{iremark}
\newtheorem{setup}[theorem]{Setup}
\newtheorem{convention}[theorem]{Convention}
\newtheorem{remark}[theorem]{Remark}

\setlength\parindent{0pt}
\setlength{\parskip}{5pt}
\newcommand{\C}{\mathbb{C}}
\newcommand{\Z}{\mathbb{Z}}

\newcommand{\la}{\langle}
\newcommand{\ra}{\rangle}
\newcommand{\lp}{\left(}
\newcommand{\rp}{\right)}
\newcommand{\sq}{\subseteq}
\newcommand{\ds}{\dots}
\newcommand{\cds}{\cdots}
\newcommand{\arrow}{arrow}

\newcommand{\N}{\mathbb{N}}

\newcommand{\fm}{\mathfrak{m}}

\renewcommand{\phi}{\varphi}

\DeclareMathOperator{\GL}{GL}

\DeclareMathOperator{\Hom}{Hom}
\DeclareMathOperator{\End}{End}

\DeclareMathOperator{\Sym}{Sym}
\DeclareMathOperator{\Spec}{Spec}

\DeclareMathOperator{\Supp}{Supp}

\DeclareMathOperator{\gr}{gr}

\DeclareMathOperator{\Der}{Der}

\DeclareMathOperator{\Char}{char}

\DeclareMathOperator{\stHom}{*Hom}

\setlist[enumerate]{itemsep=2pt, topsep=2pt, itemindent=10pt, label=(\roman*)}

\begin{document}

\title{Symmetry on rings of differential operators}
\author{Eamon Quinlan-Gallego \footnote{The author was partially supported by NSF DMS grants 1801697 and 1840234}}

\maketitle

\begin{abstract}
	If $k$ is a field and $R$ is a commutative $k$-algebra, we explore the question of when the ring $D_{R|k}$ of $k$-linear differential operators on $R$ is isomorphic to its opposite ring. Under mild hypotheses, we prove this is the case whenever $R$ Gorenstein local or when $R$ is a ring of invariants. As a key step in the proof we show that in many cases of interest canonical modules admit right $D$-module structures. 
\end{abstract}

\section{Introduction} \label{scn-intro}

Let $k$ be a field. A construction of Grothendieck assigns to every $k$-algebra $R$ its ring $D_{R|k}$ of $k$-linear differential operators, which is a noncommutative ring that consists of certain $k$-linear operators on $R$ \cite[\S 16]{EGAIV}.

Suppose $k$ has characteristic zero. If $R := k[x_1, \ds, x_n]$ is a polynomial ring over $k$ then the ring $D_{R|k}$ is called the Weyl algebra over $k$ and it has a particularly pleasant structure; for example, it is (left and right) Noetherian and its global dimension is $n$ \cite{Roos72}. These facts adapt readily to the case where $R$ is an arbitrary regular $k$-algebra that is essentially of finite type \cite[\S3]{Bjork79}, and in this context the study of $D_{R|k}$ and its modules has numerous applications in singularity theory (e.g. Bernstein-Sato polynomials) and in commutative algebra (e.g. the study of local cohomology \cite{Lyu93}). 

Since the ring $D_{R|k}$ is noncommutative, a priori its left and right modules could behave very differently. However, a key feature of the Weyl algebra $D_{R|k}$ is that it is isomorphic to its opposite ring $D_{R|k}^{op}$ via an involutive isomorphism that fixes the subring $R$ (see Remark \ref{rmk-char-zero}). This induces an equivalence between the categories of left and right $D_{R|k}$-modules, sometimes known as the side-changing functor, which is used abundantly when defining functors on $D_{R|k}$-modules \cite[\S1]{HTT}.

The goal of this paper is to show that such an isomorphism exists under much weaker hypotheses on the singularities of $R$, and even in positive characteristic. For example, our main results ensure that this is the case whenever $R$ is a normal Gorenstein local domain that is essentially of finite type over a perfect field $k$ (Theorems \ref{thm-antiauto-on-Gor} and \ref{thm-when-invol}) or whenever $R$ is a ring of invariants over a field $k$ of characteristic zero (Theorem \ref{thm-quot-invol}). We also show that such an isomorphism is unique when $k$ has positive characteristic and $R$ is a polynomial ring over $k$ (Theorem \ref{thm-curious-charp}); this is somewhat surprising, since the analogous statement in characteristic zero is far from true (see Subsection \ref{subscn-curious-result}). We note that there exist examples of rings of differential operators $D_{R|k}$ which are right Noetherian but not left Noetherian \cite[\S7]{SS88} \cite[\S5]{Muh88} \cite{Tripp}, thus providing examples where one could not have an isomorphism $D_{R|k} \cong D_{R|k}^{op}$. 

Our hope is that these results can contribute to the study of the behavior of differential operators on singular algebras, which remains somewhat mysterious despite substantial results. For example, when $X = \Spec R$ is an affine curve in characteristic zero, the structure of $D_{R|k}$ has been studied extensively and we know that $D_{R|k}$ is a (left and right) Noetherian finitely generated $k$-algebra \cite{Bloom73} \cite{Vig73} \cite{SS88} \cite{Muh88}. A famous counterexample of Bernstein, Gelfand and Gelfand showed that this does not remain true in higher dimensions: they proved that for $R := \C[x,y,z]/(x^3 + y^3 + z^3)$ the ring $D_{R|\C}$ is neither finitely generated nor Noetherian \cite{BGG72} (see \cite{Vig75} for further examples).

Despite this well-known intractability, there is renewed interest in differential operators on singular algebras due to some recent applications in commutative algebra.  For example, when $R$ is a direct summand of a polynomial or power series ring over $k$, \`{A}lvarez-Montaner, Huneke and N\'u\~nez-Betancourt have used the $D_{R|k}$-module structure to give a new proof of the finiteness of associated primes of local cohomology of $R$ \cite{AMHNB}. They also show that elements of $R$ admit Bernstein-Sato polynomials (see \cite{AMHJNBTW19} for more on this direction, including a notion of $V$-filtrations for direct summands). Brenner, Jeffries and N\'u\~nez-Betancourt have also used rings of differential operators to introduce a characteristic zero analogue of $F$-signature, called differential signature \cite{BJNB19}. 

In characteristic $p>0$ there is further evidence that $D_{R|k}$ carries information about the singularities of $R$. For example, Smith showed that an $F$-pure and $F$-finite domain $R$ is simple as a left $D_{R|k}$-module if and only if it is strongly $F$-regular \cite{Smi95}. In characteristic zero, Hsiao has shown that whenever $R$ is the homogeneous coordinate ring of a smooth projective variety $X$ the simplicity of $R$ as a left $D_{R|k}$-module implies the bigness of the tangent bundle $T_X$ of $X$ \cite{Hsi15}, and Mallory has used this to construct counterexamples to the characteristic-zero analogue of Smith's result \cite{Mall20}.
 
Let us now describe the different sections and results in the paper in more detail. The paper begins with some background material in Section 2. In Section 3 we show that if $R$ is a local or graded Cohen-Macaulay $k$-algebra then, under reasonable hypotheses, the canonical module $\omega_R$ admits a right $D_{R|k}$-module structure (Theorems \ref{thm-omegaR-rightDmodule-1},  \ref{thm-omegaR-rightDmodule-2}, \ref{thm-omegaR-rightDmodule-3} and \ref{thm-omegaR-rightDmodule-4}). This constitutes our main tool for Section 4, where we start studying the existence of isomorphisms $D_{R|k} \cong D_{R|k}^{op}$. Our main result is as follows.
\begin{theorem*} [\ref{thm-antiauto-on-Gor}]
	Let $k$ be a field and $R$ be a Gorenstein $k$-algebra. Assume that one of the following holds:
	\begin{enumerate}[(1)]
		\item The field $k$ has characteristic zero and $R$ is a local normal domain that is essentially of finite type over $k$.
		
		\item The field $k$ is a perfect field of characteristic $p>0$, and $R$ is local, $F$-finite and admits a canonical module.
		
		\item The ring $R$ is complete local and $k$ is a coefficient field.
		
		\item The ring $R = \bigoplus_{n = 0}^\infty R_n$ is graded and finitely generated over $R_0 = k$.
	\end{enumerate}
	Then there is a ring isomorphism $D_{R|k} \cong D_{R|k}^{op}$ that fixes $R$. It respects the order filtration and, if $k$ a perfect field of positive characteristic, it also respects the level filtration (see \ref{subsubscn-rodo-charp}). 
\end{theorem*}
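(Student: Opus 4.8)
The plan is to reduce everything to the existence of a right $D_{R|k}$-module structure on the canonical module, which is exactly the content of the Section 3 results. Write $D := D_{R|k}$. In each of the four cases, the appropriate theorem among \ref{thm-omegaR-rightDmodule-1}--\ref{thm-omegaR-rightDmodule-4} supplies a right $D$-module structure on $\omega_R$ extending its $R$-module structure. Since $R$ is Gorenstein and local (or graded with $R_0 = k$), the canonical module is free of rank one (up to a degree shift in the graded case), so I may fix an isomorphism $\phi\colon \omega_R \simto R$ of $R$-modules. Transporting the right action along $\phi$ produces a $k$-linear anti-homomorphism $\rho\colon D \to \End_k(R)$, characterized by $\rho(d)(\phi(\omega)) = \phi(\omega\cdot d)$. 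The goal is to show that $\rho$ takes values in the subring $D \subseteq \End_k(R)$ and defines a ring isomorphism $D \simto D^{op}$ fixing $R$; the choice of $\phi$ is harmless, since another choice differs by a unit of $R$ and only conjugates $\rho$ by that unit.

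First I would record that $\rho$ restricts to the identity on $R$: a right $D$-module structure restricts to the given right $R$-module structure on $\omega_R$, and under $\phi$ the action of $r \in R \subseteq D$ becomes multiplication by $r$, so $\rho(r) = r$. Next, to see that $\rho$ lands in $D$ and preserves the order filtration, I would run the standard commutator induction: using that $\rho$ is an anti-homomorphism and the identity on $R$, for $d \in D$ and $r \in R$ one computes $[\rho(d), r] = -\rho([d,r])$, and since $[d,r]$ drops the order of $d$, induction on the order gives $\rho(D^{\le n}) \subseteq D^{\le n}$ with $\rho(D^{\le 0}) = R$. Tracking first-order operators, the same computation shows that for a derivation $\delta$ one has $\rho(\delta) = -\delta + a_\delta$ for some $a_\delta \in R$; hence the induced map on the associated graded is the identity on $\gr^0 D = R$ and equals $-\id$ on $\gr^1 D = \Der_k(R)$.

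The hard part is proving that $\rho$ is bijective, and this is where I expect the real obstacle to lie: for singular $R$ the graded ring $\gr D$ need not be generated over $R$ in degrees $\le 1$, so bijectivity of $\gr\rho$ on the higher graded pieces is not automatic. My plan is to reduce to the associated graded, via the standard fact that a filtered endomorphism inducing an isomorphism on $\gr$ is itself an isomorphism (the order filtration being exhaustive and bounded below). To check that each $\gr^n\rho \in \End_R(\gr^n D)$ is an isomorphism, I would argue generically and then propagate: over the smooth locus of $\Spec R$ (nonempty under the normality/reducedness in the hypotheses) one has $\gr D = \Sym_R \Der_k(R)$, where $\gr^n\rho = (-1)^n\id$ is visibly invertible, and then I would use torsion-freeness of $\gr^n D$ together with the normality of $R$ to upgrade this generic isomorphism to a genuine one (for instance by verifying invertibility in codimension one and invoking reflexivity). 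The four cases are then uniform, differing only in which Section 3 theorem furnishes the right structure; finally, in the perfect characteristic $p$ setting, the same argument applied level by level---using that the structure produced in Section 3 is compatible with the level filtration of \ref{subsubscn-rodo-charp}---shows that $\rho$ also respects the level filtration.
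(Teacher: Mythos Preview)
Your overall strategy matches the paper's: get a compatible right $D$-module structure on $\omega_R$ from Section~3, transport it to $R$, and read off an anti-homomorphism $\rho$ fixing $R$; your commutator identity $[\rho(d), r] = -\rho([d,r])$ and the induction showing $\rho(D^{\le n}) \subseteq D^{\le n}$ are exactly what the paper uses.

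The gap is in what you call ``the hard part''. You stop the induction too early: the same commutator identity, fed into a slightly sharper inductive claim, gives bijectivity with no geometric input whatsoever. One shows by induction on $n$ that $\rho(\xi) - (-1)^n \xi$ has order $\le n-1$ for every $\xi$ of order $\le n$, since for $f \in R$
\[
[\rho(\xi) - (-1)^n \xi,\, f] \;=\; -\rho([\xi,f]) - (-1)^n[\xi,f] \;=\; -\bigl(\rho([\xi,f]) - (-1)^{n-1}[\xi,f]\bigr)
\]
has order $\le n-2$ by the inductive hypothesis applied to $[\xi,f]$. Hence $\gr^n\rho = (-1)^n\id$ on \emph{every} graded piece, not just $n \le 1$, and $\rho$ is bijective; this is precisely Lemma~\ref{lemma-homo-is-iso}, valid for an arbitrary commutative $k$-algebra. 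Your smooth-locus/reflexivity workaround is both unnecessary and insufficient: cases (2)--(4) do not assume $R$ is normal, a domain, or even reduced (e.g.\ $R = k[x]/(x^2)$ satisfies (3) and (4)), so there may be no dense smooth locus and no torsion-freeness to exploit; even in case~(1), reflexivity of $D^{\le n}$ need not pass to the subquotient $\gr^n D$. (Similarly, preservation of the level filtration is purely formal---apply $\rho$ to the equation $\xi f^{p^e} = f^{p^e}\xi$---and does not require revisiting the Section~3 construction; see Lemma~\ref{lemma-abstract-stuff}(c).)
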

After this work was completed we realized that some of our work from Section 3 and cases (1) and (4) from Theorem \ref{thm-antiauto-on-Gor} are already due to Yekutieli \cite[Cor. 4.9]{Yek98}, using more sophisticated machinery (most notably, \cite{Yek95}). We hope that our more elementary approach has some value.

In Section 4 we also explore the question of when such an isomorphism must be involutive (i.e. its own inverse). In this regard our result is given by the following Theorem. We note that the proof of case (1) when $\Char k = p>0$ involves the use of the Skolem-Noether theorem.
\begin{theorem*} [\ref{thm-when-invol}]
	Let $k$ be a field, $R$ be a commutative $k$-algebra and $\Phi: D_{R|k} \cong D_{R|k}^{op}$ be a ring isomorphism that fixes $R$. Then $\Phi$ is involutive in the following cases:
	\begin{enumerate}[(1)]
		\item The algebra $R$ is reduced and essentially of finite type over $k$.
		
		\item The ring $R$ is local, Gorenstein and zero dimensional, $k$ is a coefficient field of $R$ and $\Phi$ is the ring isomorphism that corresponds to the compatible right $D_{R|k}$-module structure on $R$ given by pullback via an isomorphism $R \cong \Hom_k(R,k)$ (see Lemma \ref{lemma-abstract-stuff}(a)).
	\end{enumerate}
\end{theorem*}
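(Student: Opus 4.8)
The plan is to reduce both cases to a statement about a single symmetric bilinear pairing. Write $D = D_{R|k}$ and regard $\Phi$ as an anti-automorphism of $D$ that restricts to the identity on $R \subseteq D$. First I would record two structural consequences. Since the order filtration is intrinsic --- $D^{\leq n} = \{\delta : [\delta, r] \in D^{\leq n-1} \text{ for all } r \in R\}$ with $D^{\leq 0} = R$ --- and since $\Phi([\delta,r]) = -[\Phi(\delta), r]$ for $r \in R$, an easy induction shows $\Phi(D^{\leq n}) = D^{\leq n}$. Applying this in degree one to a $k$-derivation $\theta \in \Der_k(R)$ and comparing commutators with $R$ forces $\Phi(\theta) = -\theta + \Phi(\theta)(1)$, so that $\Lambda := \Phi^2$ is a genuine $k$-algebra automorphism of $D$ that fixes $R$ and fixes $\Der_k(R)$ pointwise. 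The whole theorem then amounts to showing $\Lambda = \id$.

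For case (1) in characteristic zero I would reduce to generic points. As $R$ is reduced it embeds in its total quotient ring $K = \prod_i \kappa(\fp_i)$, a finite product of the residue fields at the minimal primes, and as $R$ is essentially of finite type differential operators commute with localization, giving an embedding $D \hookrightarrow D_{K|k} = \prod_i D_{\kappa(\fp_i)|k}$. Using that $\Phi$ is $R$-bilinear with the two sides interchanged ($\Phi(r\delta) = \Phi(\delta)r$ and $\Phi(\delta r) = r\Phi(\delta)$), it extends to an anti-automorphism of $D_{K|k}$ fixing $K$, so $\Lambda$ extends to an automorphism fixing $K$ and $\Der_k(K)$. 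Each $\kappa(\fp_i)$ is a finitely generated, hence separably generated, extension of $k$, so it is the fraction field of a smooth $k$-algebra and $D_{\kappa(\fp_i)|k}$ is generated as a $k$-algebra in order $\leq 1$. Since $\Lambda$ fixes $\kappa(\fp_i)$ and $\Der_k(\kappa(\fp_i))$ (the latter spanned over $\kappa(\fp_i)$ by $\Der_k(R)$), it fixes these generators, hence is the identity on each factor and therefore on $D$.

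For case (1) in positive characteristic (with $k$ perfect, so that the level filtration is available) degree-one generation fails, and this is where the real work lies. Here I would use that $D = \bigcup_e \End_{R^{p^e}}(R)$ and that each level $D^{(e)} = \End_{R^{p^e}}(R) = \{\delta : [\delta, r^{p^e}] = 0 \text{ for all } r\}$ is intrinsic, hence preserved by $\Phi$, which moreover fixes the center $R^{p^e}$. After reducing to generic points as above, each $\End_{\kappa(\fp_i)^{p^e}}(\kappa(\fp_i))$ is a matrix algebra over the field $\kappa(\fp_i)^{p^e}$, so the Skolem--Noether theorem identifies the $\kappa(\fp_i)^{p^e}$-linear anti-automorphism $\Phi$ with the adjoint $\delta \mapsto \delta^\ast$ attached to some nondegenerate bilinear form $b$ on $\kappa(\fp_i)$, unique up to scalar. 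The requirement that $\Phi$ fix the multiplication operators forces $b$ to be balanced, $b(rx,y) = b(x,ry)$, whence $b(x,y) = b(1,xy) = b(y,x)$ is symmetric because $R$ is commutative; and the adjoint with respect to a symmetric nondegenerate form is involutive. Thus $\Lambda = \Phi^2 = \id$ on each $D^{(e)}$, and so on all of $D$. The main obstacle is exactly this step: casting the anti-automorphism in adjoint form (the Skolem--Noether input) and arranging the passage to the generic points so that the relevant endomorphism algebras are honestly central simple.

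Case (2) is the local model of this last computation and is cleaner because $\Phi$ is given explicitly. Choosing an $R$-module isomorphism $\beta : R \simto \Hom_k(R,k)$ and setting $\lambda := \beta(1) \in \Hom_k(R,k)$, the induced right $D$-module structure corresponds (via Lemma \ref{lemma-abstract-stuff}(a)) to the anti-automorphism $\Phi$ characterized as the adjoint with respect to the pairing $\langle a, b\rangle := \lambda(ab)$. This pairing is nondegenerate because $R$ is zero-dimensional Gorenstein, so that $\beta$ is an isomorphism, and it is symmetric because $R$ is commutative, $\langle a,b\rangle = \lambda(ab) = \lambda(ba) = \langle b,a\rangle$. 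As in the previous paragraph, the adjoint with respect to a symmetric nondegenerate pairing satisfies $(\delta^\ast)^\ast = \delta$, so $\Phi$ is involutive; here no reduction or Skolem--Noether is needed, only the identification of $\Phi$ with an adjoint and the symmetry of $a,b \mapsto \lambda(ab)$.
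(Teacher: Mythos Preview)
Your argument is correct and follows the same overall architecture as the paper---reduce case~(1) to residue fields at minimal primes, handle characteristic zero by generation in order~$\leq 1$, handle characteristic~$p$ via Skolem--Noether on each level, and treat case~(2) by a direct computation with the functional $\lambda=\beta(1)$---but the Skolem--Noether step is packaged differently. The paper (Lemma~\ref{lemma-K-involutive-charp}) fixes a reference involution $\Psi$ on $D_{K|k}$ coming from Proposition~\ref{prop-exists-involn-smoothcase}, writes $\Phi\Psi$ as conjugation by some $\gamma$, and then observes that $\gamma$ must lie in $K$ because both $\Phi$ and $\Psi$ fix $K$; involutivity of $\Phi$ then drops out of involutivity of $\Psi$. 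You instead cast $\Phi$ on each matrix algebra $\End_{K^{p^e}}(K)$ as the adjoint for a nondegenerate $K^{p^e}$-bilinear form $b$, and the condition $\Phi|_K=\id$ becomes the balancedness $b(rx,y)=b(x,ry)$, which forces $b(x,y)=b(1,xy)$ and hence symmetry of $b$ by commutativity of $K$; involutivity is then the standard fact that adjunction for a symmetric form is an involution. This buys you a self-contained argument that does not invoke the explicit smooth involution of Proposition~\ref{prop-exists-involn-smoothcase}, and it unifies nicely with your treatment of case~(2), which is exactly the same symmetry argument with $\lambda(ab)$ playing the role of $b(a,b)$; the paper's version of~(2) is the same computation written out coordinate-wise and finished off with Lemma~\ref{lemma-suff-to-check-on-one}. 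One small remark: what you call ``Skolem--Noether'' for anti-automorphisms is really Skolem--Noether for automorphisms applied after composing with the transpose, which is harmless but worth saying once.
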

We conclude Section 4 by pointing out the following curious result in characteristic $p>0$, which follows from a generalization of the Skolem-Noether theorem due to Rosenberg and Zelinski \cite{RZ61} (Theorem \ref{thm-RZ}). We note that the statement is false in characteristic zero (see Subsection \ref{subscn-curious-result}). 
\begin{theorem*} [\ref{thm-curious-charp}]
	Let $k$ be a perfect field of characteristic $p>0$ and $R:=k[x_1, \ds, x_n]$ be a polynomial ring over $k$. Then there is a unique isomorphism $D_{R|k} \cong D_{R|k}^{op}$ that fixes $R$. 
\end{theorem*}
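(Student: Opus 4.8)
The plan is to exploit the explicit structure of $D_{R|k}$ in positive characteristic and reduce the uniqueness statement to the Rosenberg--Zelinski theorem applied level by level. Recall that for $R = k[x_1,\dots,x_n]$ with $k$ perfect of characteristic $p$, the ring $R$ is free of finite rank over each subring $R^{p^e} = k[x_1^{p^e},\dots,x_n^{p^e}]$, and the level filtration identifies the level-$e$ operators with $D_{R|k}^{(e)} := \End_{R^{p^e}}(R)$, so that $D_{R|k} = \bigcup_{e\ge 0} D_{R|k}^{(e)}$ (see \ref{subsubscn-rodo-charp}). Each $D_{R|k}^{(e)}$ is thus a matrix algebra over the commutative ring $R^{p^e}$, hence an Azumaya algebra with center $R^{p^e}$; since $R^{p^e}$ is again a polynomial ring over $k$ it is a UFD and $\operatorname{Pic}(R^{p^e}) = 0$. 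Existence of an isomorphism $D_{R|k}\cong D_{R|k}^{op}$ fixing $R$ is already guaranteed by Theorem \ref{thm-antiauto-on-Gor}, so the entire content is uniqueness.

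The first step is to show that any ring isomorphism $\Phi\colon D_{R|k}\to D_{R|k}^{op}$ fixing $R$ must respect the level filtration. The key observation is the intrinsic description $D_{R|k}^{(e)} = \{\theta \in D_{R|k} : \theta r = r\theta \text{ for all } r \in R^{p^e}\}$, i.e.\ $D_{R|k}^{(e)}$ is the centralizer of $R^{p^e}$ inside $D_{R|k}$. Viewing $\Phi$ as an anti-automorphism $\phi$ of $D_{R|k}$ fixing $R$ pointwise (and hence fixing $R^{p^e}\subseteq R$ pointwise), one checks directly that $\phi$ preserves the centralizer of any subset of $R$: if $\theta r = r\theta$ then applying $\phi$ yields $r\,\phi(\theta) = \phi(\theta)\,r$. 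Therefore $\phi(D_{R|k}^{(e)}) = D_{R|k}^{(e)}$, and $\phi$ restricts to an anti-automorphism $\phi_e$ of the Azumaya algebra $D_{R|k}^{(e)}$ fixing both its center $R^{p^e}$ and the subring $R$. Since $D_{R|k} = \bigcup_e D_{R|k}^{(e)}$, it suffices to prove that for each $e$ there is a unique such $\phi_e$.

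The final and decisive step handles this level-wise uniqueness using the Rosenberg--Zelinski theorem (Theorem \ref{thm-RZ}). Given two anti-automorphisms $\phi_e, \phi_e'$ of $A := D_{R|k}^{(e)}$ fixing $R$, the composite $(\phi_e')^{-1}\circ\phi_e$ is an $R^{p^e}$-linear automorphism of $A$ that fixes $R$ pointwise. Because $\operatorname{Pic}(R^{p^e}) = 0$, Theorem \ref{thm-RZ} forces this automorphism to be inner, say conjugation by a unit $u \in A^\times$. Fixing $R$ pointwise means $u$ centralizes $R$; but the centralizer of $R$ in $A = \End_{R^{p^e}}(R)$ is $\End_R(R) = R$, whose unit group is $k^\times$, which lies in the center of $A$. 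Hence conjugation by $u$ is the identity and $\phi_e = \phi_e'$. Assembling the levels, $\Phi$ is uniquely determined. The main obstacle is the first step --- controlling an a priori arbitrary $\Phi$ through the intrinsic centralizer characterization of the level filtration --- together with the careful bookkeeping needed so that the Picard-triviality of $R^{p^e}$ and the ``fixes $R$'' hypothesis combine to reduce the inner-automorphism unit all the way down to a central scalar.
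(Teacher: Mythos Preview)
Your proof is correct and follows essentially the same route as the paper: reduce to each level $D^{(e)}_R$ via the centralizer description of the level filtration (the content of Lemma~\ref{lemma-abstract-stuff}(c)), apply the Rosenberg--Zelinski theorem to the automorphism obtained by composing two anti-automorphisms, and then observe that the conjugating unit must lie in the centralizer of $R$, i.e.\ in $R$ itself, whose units are $k^\times$. The only cosmetic difference is that the paper cites Proposition~\ref{prop-exists-involn-smoothcase} for existence rather than Theorem~\ref{thm-antiauto-on-Gor}, and phrases the inner-automorphism step by reference to the argument of Lemma~\ref{lemma-K-involutive-charp}.
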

We end the paper in Section 5, where we use a theorem of Kantor (Theorem \ref{thm-Kantor}) to explore the existence of isomorphisms $D_{R|k} \cong D_{R|k}^{op}$ for rings of invariants. Our result is the following.

\begin{theorem*} [\ref{thm-quot-invol}]
	Let $k$ be a field of characteristic zero, $G$ be a finite subgroup of $\GL_n(k)$ that contains no pseudoreflections, $S := k[x_1, \ds, x_n]$ be a polynomial ring over $k$ equipped with the standard linear action of $G$ and $R := S^G$ be the ring of $G$-invariants of $S$. Then there is an involutive isomorphism $D_{R|k} \cong D_{R|k}^{op}$ that fixes $R$. 
\end{theorem*}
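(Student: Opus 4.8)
The plan is to transport the classical involution on the Weyl algebra $D_{S|k}$ through Kantor's isomorphism. Since $k$ has characteristic zero and $S = k[x_1, \ds, x_n]$ is a polynomial ring, $D_{S|k}$ is the Weyl algebra and carries the standard involutive anti-automorphism $\tau$ (see Remark \ref{rmk-char-zero}), determined on generators by $\tau(x_i) = x_i$ and $\tau(\partial_i) = -\partial_i$ and extended anti-multiplicatively; equivalently $\tau$ is an involutive isomorphism $D_{S|k} \cong D_{S|k}^{op}$ fixing $S$. The group $G$ acts on $S$ by linear substitutions and hence on $D_{S|k}$ by conjugation, and Theorem \ref{thm-Kantor} identifies $D_{R|k}$ with the subring $(D_{S|k})^G$ of $G$-invariant differential operators via the map sending an invariant operator to its restriction to $R = S^G$; this identification fixes $R$. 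Thus it suffices to produce an involutive anti-automorphism of $(D_{S|k})^G$ that fixes $R$, and for this I would simply restrict $\tau$.

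The step I would carry out first, and which I expect to be the crux, is to verify that $\tau$ commutes with the conjugation action of $G$. Because each $g \in G$ acts linearly, it preserves the $k$-span of the $x_i$ (acting via $g$) and the $k$-span of the $\partial_i$ (acting via the contragredient representation). On these two spaces $\tau$ acts respectively as $+1$ and $-1$, i.e. as scalars, and scalar operators commute with the linear $G$-action; hence $\tau \circ g = g \circ \tau$ on the generators $x_i, \partial_i$. A short induction on products, using that $\tau$ is anti-multiplicative while $g$ is multiplicative, propagates this identity to all of $D_{S|k}$. It follows that $\tau$ preserves $(D_{S|k})^G$: if $g(P) = P$ for every $g \in G$, then $g(\tau(P)) = \tau(g(P)) = \tau(P)$, so $\tau(P)$ is again $G$-invariant.

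Granting this, $\tau$ restricts to an anti-automorphism of $(D_{S|k})^G$ which is still involutive (a restriction of an involution) and still fixes $R = S^G \sq S$ since $\tau$ fixes all of $S$. Transporting this restricted $\tau$ along the Kantor isomorphism $D_{R|k} \cong (D_{S|k})^G$, which fixes $R$, yields an involutive isomorphism $D_{R|k} \cong D_{R|k}^{op}$ fixing $R$, as required. The only genuine obstacle I anticipate is bookkeeping: confirming that the $G$-action featured in Theorem \ref{thm-Kantor} is exactly the conjugation action, and checking the compatibility $\tau \circ g = g \circ \tau$ on the nose, keeping track of the fact that $G$ acts on the $\partial_i$ by the contragredient of its action on the $x_i$. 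Everything after the commutation $\tau g = g \tau$ is formal transport through an isomorphism designed to fix $R$.
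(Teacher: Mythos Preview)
Your proposal is correct and follows essentially the same route as the paper: restrict the standard transposition on $D_{S|k}$ to the $G$-invariants via Kantor's isomorphism, after checking that the transposition commutes with the $G$-action on generators. The paper spells out explicitly the verification that $\gamma \cdot \partial_i$ lies in the $k$-span of $\partial_1, \ds, \partial_n$ (what you call the contragredient action), but otherwise the argument is the same.
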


\subsection*{Acknowledgements}
I would like to thank my advisor, Karen Smith, for many enjoyable conversations about this topic, for her encouragement and for her support. I would also like to thank Devlin Mallory for many useful comments. Finally, I would like to thank Jack Jeffries for many useful suggestions and, in particular, for showing me how to tackle Theorem \ref{thm-omegaR-rightDmodule-1}.  This research was partially supported by NSF DMS grants 1801697 and 1840234.

\section{Background} \label{scn-background}

\subsection{Rings of differential operators} \label{subscn-rodo}

Let $k$ be a commutative ring and let $R$ be a commutative $k$-algebra. The ring $D_{R|k}$ of $k$-linear differential operators on $R$ is a particular subring of $\End_k(R)$, the ring of $k$-linear endomorphisms on $R$. After fixing some notation, we recall its definition.

\begin{convention}
	By an abuse of notation, we identify every element $f$ of $R$ with the operator $R \to R$ given by $[g \mapsto fg]$. In this manner, we also identify $R$ with the subring $\End_R(R)$ of $\End_k(R)$. 
\end{convention}

We inductively define $k$-subspaces $D^n_{R|k}$ of $\End_k(R)$, which will be called the $k$-linear differential operators $D^n_{R|k}$ of order $\leq n$ on $R$, as follows:
\begin{align}
D^0_{R|k} & = R \nonumber \\
D^n_{R|k} & = \{\xi \in \End_k(R) : [\xi, f] \in D^{n-1}_{R|k} \text{ for all } f \in R\}. \label{eqn-Dn}
\end{align}
These $D^n_{R|k}$ form an increasing chain of subspaces of $\End_k(R)$. Moreover, composing a differential operator of order at most $n$ with one of order at most $m$ yields a differential operator of order at most $n + m$; that is,
$$D^n_{R|k} D^m_{R|k} \sq D^{n+m}_{R|k}.$$
In particular, the $D^n_{R|k}$ are (left and right) $R$-submodules of $\End_k(R)$, and the increasing union
$$D_{R|k} : = \bigcup_{n = 0}^{\infty} D^n_{R|k}$$
is a subring of $\End_k(R)$, which we call the ring of $k$-linear differential operators on $R$.

\begin{remark} \label{rmk-derivations}
	We note that every $k$-linear derivation is a differential operator of order $\leq 1$; i.e. $\Der_k(R) \sq D^1_{R|k}$. In fact, one can check that $D^1_{R|k} = R \oplus \Der_k(R)$. 
\end{remark}

\begin{remark} \label{rmk-order-minus1}
	We will use the convention that $D^{-1}_{R|k} = \{0\}$, and we note that (\ref{eqn-Dn}) is still valid for $n = 0$. 
\end{remark}

\subsubsection{Rings of differential operators in positive characteristic} \label{subsubscn-rodo-charp}

Suppose now that $k$ is a perfect field of positive characteristic $p>0$ and that $R$ is $F$-finite; that is, $R$ is noetherian and finitely generated as a module over its subring $R^p$ of $p$-th powers (this holds, for example, whenever $R$ is essentially of finite type over $k$). In this setting the ring $D_{R|k}$ admits another filtration, this time by {\it subrings}, as follows.

For each positive integer $e > 0$, we define the ring $D^{(e)}_R$ of differential operators of level $e$ on $R$ by $D^{(e)}_R := \End_{R^{p^e}}(R)$; i.e. $D^{(e)}_R$ consists of all the operators on $R$ that are linear over its subring $R^{p^e}$ of $p^e$-th powers. The rings $D^{(e)}_R$ form an increasing union and we have (see \cite{Yek} \cite[\S2.5]{SVdB97})
$$D_{R|k} = \bigcup_{e = 0}^\infty D^{(e)}_R.$$
\subsubsection{Rings of differential operators on smooth algebras} \label{subsubscn-rodo-smooth}

The most typical example of a ring $D_{S|k}$ of differential operators comes from the case where $k$ is a field of characteristic zero and $S:=k[x_1, \ds, x_n]$ is a polynomial ring over $k$. In this case $D_{S|k}$ is generated by $S$ (acting, as usual, by multiplication) and the derivations $\partial_1, \ds, \partial_n$, where $\partial_i = \frac{\partial}{\partial x_i}$. The ring $D_{S|k}$ is known as the Weyl algebra in $2n$ generators over $k$ and a $k$-algebra presentation for it is given by
\begin{equation} \label{eqn-weyl-pres}
D_{S|k} = \frac{ k \la x_1, \ds, x_n,  \partial_1, \ds, \partial_n \ra }{\la [\partial_i, x_j] = \delta_{ij}, [\partial_i, \partial_j] = 0 , [x_i, x_j] = 0 \ra },
\end{equation}
where $\delta_{ij}$ is the Kronecker delta symbol. In particular, there is a left $S$-module decomposition
$$D_{S|k} = \bigoplus_{\alpha \in \N_0^n} S \partial^\alpha$$
where $\partial^\alpha = \partial_1^{\alpha_1} \cds \partial_n^{\alpha_n}$ \cite[Prop. 1.1.2]{Bjork79}.

Our next goal is to formulate a version of these statements due to Grothendieck that works, after a suitable localization, for any commutative essentially smooth algebra over an arbitrary commutative ring. More precisely, it will work under the following conditions.

\begin{setup} \label{setup-smooth}
	Let $k$ be a commutative ring and $S$ be a commutative $k$ algebra that is formally smooth and essentially of finite presentation. We furthermore assume that the module $\Omega^1_{S|k}$ of K\"ahler differentials is free over $S$ which, we recall, can always be achieved by localizing. We pick elements $x_1, \ds, x_n \in S$ such that $d x_1, \ds, d x_n$ form an $S$-basis for $\Omega^1_{S|k}$. Of course, a typical example is $S = k[x_1, \ds, x_n]$. 
\end{setup}

In order to state the result, we introduce multi-index notation. Given elements $z_1, \ds, z_n \in S$ and a multi-exponent $\alpha \in \N_0^n$ we denote by $z^\alpha$ the element $z^\alpha := z_1^{\alpha_1} \cds z_n^{\alpha_n}$. Given a  multi-exponent $\alpha \in \N_0^n$ we denote $\alpha! := \alpha_1! \cds \alpha_n!$ and $|\alpha| = \alpha_1 + \cds + \alpha_n$. Given $\alpha, \beta \in \N_0^n$, we say $\alpha \leq \beta$ whenever $\alpha_i \leq \beta_i$ for all $i = 1, 2, \ds, n$. With this notation, we have a multivariate binomial theorem: if $z_1, \ds, z_n$ and $y_1, \ds, y_n$ are elements of $S$ and $\alpha \in \N_0^n$ is a multi-exponent then $(z + y)^\alpha = \sum_{\beta + \gamma = \alpha} \frac{\alpha!}{\beta! \gamma!} z^\beta y^\gamma$. We note that, for $\beta + \gamma = \alpha$, the number $\alpha!/(\beta! \gamma!)$ is an integer. With this notation, the statement is as follows.

\begin{theorem} [{\cite[\S16.11, \S17.12.4]{EGAIV}}] \label{thm-EGA-presentation}
	Let $k$ and $S$ be as in Setup \ref{setup-smooth}. Then
	\begin{enumerate}[(1)]
		\item For every $\alpha \in \N_0^n$ there is a unique differential operator $\partial^{[\alpha]} \in D_{S|k}$ such that
		$$\partial^{[\alpha]}(x^\beta) = \frac{\beta!}{\alpha! (\beta - \alpha)!} x^{\beta - \alpha}$$
		for all $\beta \in \N_0^n$. In particular, $\partial^{[\alpha]} \partial^{[\beta]} = \frac{(\alpha + \beta)!}{\alpha! \beta!} \partial^{[\alpha + \beta]}.$
		\item We have
		$$D_{S|k} = \bigoplus_{\alpha \in \N_0^n} S \partial^{[\alpha]}.$$
	\end{enumerate}
\end{theorem}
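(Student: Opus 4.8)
The plan is to realize differential operators via the module of principal parts and then exploit smoothness to show that this module is free. For each $n$, set $I := \ker(S \otimes_k S \xrightarrow{\mathrm{mult}} S)$ and let $P^n_{S|k} := (S \otimes_k S)/I^{n+1}$ be the module of principal parts of order $n$, regarded as a commutative $S$-algebra through its left tensor factor. The foundational fact, which I would recall from \cite{EGAIV} or reprove from the universal property of $P^n_{S|k}$, is that there is a canonical isomorphism of left $S$-modules
$$
D^n_{S|k} \simto \Hom_S(P^n_{S|k}, S), \qquad \delta \mapsto \big(\,\overline{a \otimes b} \mapsto a\,\delta(b)\,\big),
$$
whose inverse sends a functional $\phi$ to the operator $s \mapsto \phi(\overline{1 \otimes s})$. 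The order-$\leq n$ condition is exactly what guarantees that $\overline{a \otimes b} \mapsto a\,\delta(b)$ kills $I^{n+1}$ and hence descends to $P^n_{S|k}$, and left $S$-linearity is immediate.

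First I would establish freeness of $P^n_{S|k}$. Writing $\tau_i := \overline{1 \otimes x_i - x_i \otimes 1} \in P^n_{S|k}$, the content of formal smoothness together with the hypothesis that $\Omega^1_{S|k}$ is free on $dx_1, \ds, dx_n$ is that the associated graded of the $I$-adic filtration is the symmetric algebra $\Sym_S \Omega^1_{S|k}$; since each graded piece $I^j/I^{j+1} \cong \Sym^j_S \Omega^1_{S|k}$ is then free of finite rank and the filtration of $P^n_{S|k}$ is finite, $P^n_{S|k}$ is a \emph{free} $S$-module with basis the monomials $\tau^\gamma$ for $|\gamma| \leq n$. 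This is the main input, and it is precisely the point where smoothness enters; in a self-contained treatment it is also the main obstacle, since identifying the associated graded requires the infinitesimal lifting criterion to split the relevant conormal sequences.

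Granting freeness, I would define $\partial^{[\alpha]}$ (for $|\alpha| \leq n$) to be the operator corresponding under the isomorphism above to the functional $\phi_\alpha$ dual to the basis $\{\tau^\gamma\}$, i.e. the $S$-linear map sending $\tau^\gamma$ to $1$ if $\gamma = \alpha$ and to $0$ otherwise. To compute its action I expand, using that $P^n_{S|k}$ is a commutative $S$-algebra and the multivariate binomial theorem,
$$
\overline{1 \otimes x^\beta} = \prod_i (\tau_i + x_i)^{\beta_i} = \sum_{\gamma \leq \beta} \binom{\beta}{\gamma}\, x^{\beta - \gamma}\, \tau^\gamma \quad \text{in } P^n_{S|k},
$$
whence only the $\gamma = \alpha$ term survives and $\partial^{[\alpha]}(x^\beta) = \frac{\beta!}{\alpha!(\beta-\alpha)!} x^{\beta - \alpha}$, as claimed. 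For uniqueness I would note that the displayed expansion is unitriangular with respect to any total order refining the product order on $\N_0^n$, so that $\{\overline{1 \otimes x^\beta} : |\beta| \leq n\}$ is again a free $S$-basis of $P^n_{S|k}$; hence a functional, equivalently an operator of order $\leq n$, is determined by its values on the $x^\beta$, which gives uniqueness of $\partial^{[\alpha]}$ in each $D^n_{S|k}$ and therefore in $D_{S|k}$. The composition law $\partial^{[\alpha]} \partial^{[\beta]} = \frac{(\alpha+\beta)!}{\alpha!\beta!} \partial^{[\alpha+\beta]}$ then follows by evaluating both sides on an arbitrary $x^\gamma$ and invoking the elementary identity $\binom{\gamma}{\beta}\binom{\gamma-\beta}{\alpha} = \binom{\alpha+\beta}{\alpha}\binom{\gamma}{\alpha+\beta}$ together with the just-proved uniqueness.

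Finally, part (2) is immediate: since the operators $\partial^{[\alpha]}$ with $|\alpha| \leq n$ form the dual basis to $\{\tau^\gamma\}$, they constitute a free $S$-basis of $\Hom_S(P^n_{S|k}, S) \cong D^n_{S|k}$, and taking the increasing union over $n$ yields the left $S$-module decomposition $D_{S|k} = \bigoplus_{\alpha \in \N_0^n} S\, \partial^{[\alpha]}$.
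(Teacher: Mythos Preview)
The paper does not prove this theorem; it is cited from \cite[\S16.11, \S17.12.4]{EGAIV} and stated as a black box. Your sketch is precisely the EGA approach and is correct. In fact, the paper itself invokes exactly this setup later in the proof of Lemma~\ref{lemma-commute-partial-f}: it recalls the identification $D^m_{S|k} \cong \Hom_S(P^m_{S|k}, S)$, asserts that $P^m_{S|k}$ is free on the $(d^m x)^\beta$ with $|\beta|\leq m$, and characterizes $\partial^{[\alpha]}$ as the element dual to $(d^m x)^\alpha$ --- which is verbatim your construction with $\tau = d^m x$. So there is nothing to compare; you have reconstructed the intended argument.
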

\begin{remark} \label{rmk-char-zero}
	Suppose $k$ is a field of characteristic zero and for all $i = 1, 2, \ds, n$ let $\partial_i \in \Der_k(S)$ denote the dual of $dx_i$. Then $\partial^{[\alpha]} = (1/\alpha!) \partial^\alpha$, and Theorem \ref{thm-EGA-presentation} tells us that $D_{S|k}$ is generated by $D^1_{S|k}$ as an algebra (see Remark \ref{rmk-derivations}). When $S = k[x_1, \ds, x_n]$ is a polynomial ring over $k$, this observation allows one to recover the presentation given in (\ref{eqn-weyl-pres}). 
\end{remark} 

\subsection{Anti-automorphisms in the smooth case} \label{subscn-antiauto-smooth}

In this subsection we review the standard isomorphism between the Weyl algebra and its opposite ring, and we use Grothendieck's representation (Theorem \ref{thm-EGA-presentation}) to define an analogous operation on arbitrary smooth algebras.

We first recall some terminology. Given two (noncommutative) rings $A$ and $B$, an additive map $\Phi: A \to B$ is called a ring anti-homomorphism if $\Phi(1_A) = 1_B$ and $\Phi(xy) = \Phi(y) \Phi(x)$ for all $x, y \in A$ (equivalently, we may think of $\Phi$ as a ring homomorphism $A \to B^{op}$ or $A^{op} \to B$). If $\Phi$ is bijective we say it is an anti-isomorphism, and a self anti-isomorphism is called an anti-automorphism.

If $k$ is a field of characteristic zero and $S = k[x_1, \ds, x_n]$ is a polynomial ring over $k$ we have repeatedly mentioned that $D_{S|k}$ is a Weyl algebra over $k$, and an explicit $k$-algebra presentation is given in (\ref{eqn-weyl-pres}). Using this presentation it is easy to check that the assignments $[x_i \mapsto x_i]$ and $[\partial_i \mapsto - \partial_i]$ define an anti-automorphism on $D_{S|k}$, usually called the standard transposition. Note that the standard transposition is involutive (i.e. its own inverse) and that it fixes the subring $S$. In this subsection, we extend this to the case of smooth algebras (see Proposition \ref{prop-exists-involn-smoothcase}).

\begin{lemma} \label{lemma-bin-coeff}
	Let $\sigma \in \N_0^n$ be a nonzero multi-exponent. Then
	$$\sum_{\beta + \delta = \sigma} (-1)^{|\beta|} \frac{\sigma!}{\beta! \delta!} = 0.$$
\end{lemma}
\begin{proof}
	Let $y_1 = \cds = y_n = 1$, $z_1 = \cds = z_n = -1$ and apply the multivariate binomial theorem to conclude $0 = (z + y)^{\sigma} = \sum_{\beta + \delta = \sigma} \frac{\sigma!}{\beta! \delta!} z^\beta y^\delta = \sum_{\beta + \delta = \sigma} (-1)^{|\beta|} \frac{\sigma!}{\beta! \delta!}$. 
\end{proof}

\begin{lemma} \label{lemma-commute-partial-f}
	Let $k$ and $S$ be as in Setup \ref{setup-smooth}. Then for all $\alpha \in \N_0^n$ and $f \in S$ we have the following equality in $D_{S|k}$:
	$$\partial^{[\alpha]} f = \sum_{\substack{\beta, \gamma \in \N_0^n \\ \beta + \gamma = \alpha}} \partial^{[\beta]}(f) \partial^{[\gamma]}. $$
\end{lemma}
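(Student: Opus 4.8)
The plan is to reduce the claimed identity of operators to a scalar identity that can be proved by induction, keeping everything inside the integral combinatorics of divided powers so that nothing is ever divided by a characteristic. The first ingredient I would record is that a differential operator is determined by its values on the monomials $x^\sigma$, $\sigma \in \N_0^n$. Indeed, by Theorem \ref{thm-EGA-presentation}(2) any $P \in D_{S|k}$ can be written uniquely as a finite sum $P = \sum_\gamma g_\gamma \partial^{[\gamma]}$ with $g_\gamma \in S$, and evaluating on $x^\sigma$ gives
$$P(x^\sigma) = \sum_{\gamma \le \sigma} g_\gamma \binom{\sigma}{\gamma} x^{\sigma - \gamma},$$
where $\binom{\sigma}{\gamma} := \prod_{i} \binom{\sigma_i}{\gamma_i}$ vanishes unless $\gamma \le \sigma$. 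Since the only index with $|\gamma| = |\sigma|$ and $\gamma \le \sigma$ is $\gamma = \sigma$, this is triangular with respect to $|\gamma|$, so an induction on $|\sigma|$ recovers the whole family $\{g_\gamma\}$, hence $P$ itself, from the values $\{P(x^\sigma)\}$; in particular $P = 0$ as soon as $P(x^\sigma) = 0$ for every $\sigma$. Both sides of the lemma lie in $D_{S|k}$ — the left side is the composite of $\partial^{[\alpha]}$ with multiplication by $f$, the right side a finite $S$-combination of the $\partial^{[\gamma]}$ — so it suffices to check equality after evaluating on each $x^\sigma$. Writing this out, the lemma becomes the scalar identity
$$\partial^{[\alpha]}(f\, x^\sigma) = \sum_{\substack{\beta + \gamma = \alpha \\ \gamma \le \sigma}} \binom{\sigma}{\gamma}\, \partial^{[\beta]}(f)\, x^{\sigma - \gamma}, \qquad f \in S,\ \sigma \in \N_0^n,$$
which I will denote $(\star)$.

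I would prove $(\star)$ by induction on $|\sigma|$. The case $\sigma = 0$ is immediate. The key case is $\sigma = e_i$, which amounts to the commutation relation $[\partial^{[\beta]}, x_i] = \partial^{[\beta - e_i]}$ in $D_{S|k}$, for every $\beta$ and every $i$ (with the convention that the right side is $0$ when $\beta_i = 0$). By the determination principle above this is checked on monomials: evaluating the left side on $x^\tau$ gives $\big(\binom{\tau + e_i}{\beta} - \binom{\tau}{\beta}\big) x^{\tau + e_i - \beta}$, and the bracket equals $\binom{\tau}{\beta - e_i}$ by Pascal's rule applied in the $i$-th coordinate, which is exactly $\partial^{[\beta - e_i]}(x^\tau)$. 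Equivalently, this is the one-step Leibniz rule $\partial^{[\beta]}(f x_i) = \partial^{[\beta]}(f)\, x_i + \partial^{[\beta - e_i]}(f)$.

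For the inductive step I would write $x^{\sigma + e_i} = x_i\, x^\sigma$, apply $(\star)$ for $\sigma$ with $f$ replaced by $f x_i$, and then substitute the one-step rule for each $\partial^{[\beta]}(f x_i)$. Reindexing the two resulting sums (the second by $\gamma \mapsto \gamma + e_i$) and collecting the coefficient of $\partial^{[\alpha - \gamma]}(f)\, x^{\sigma + e_i - \gamma}$ produces $\binom{\sigma}{\gamma} + \binom{\sigma}{\gamma - e_i}$, which is $\binom{\sigma + e_i}{\gamma}$ by Pascal's rule; this is precisely the coefficient required by $(\star)$ for $\sigma + e_i$, closing the induction and hence, via the determination principle, the lemma.

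The main obstacle — and the reason the statement is not merely the classical Leibniz rule — is that one must avoid dividing by integers. The naive temptation is to reduce to first order via $\partial^{[e_i]} \partial^{[\beta - e_i]} = \beta_i\, \partial^{[\beta]}$ and iterate the ordinary product rule, but $\beta_i$ need not be invertible when $k$ has positive characteristic, so this route fails. Working with the divided powers $\partial^{[\beta]}$ directly keeps every combinatorial coefficient integral, and all the identities needed (Pascal's rule, and implicitly the Vandermonde--Chu convolution that the binomials encode) hold over $\Z$. A second, more technical point is that $S$ is only assumed essentially smooth, not literally a polynomial ring, so $f$ cannot be expanded in the monomials $x^\sigma$; this is exactly why the reduction to $(\star)$ must be routed through the structure theorem and its determination principle rather than through a naive expansion of $f$.
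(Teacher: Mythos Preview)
Your proof is correct, but the route is quite different from the paper's. The paper argues through the module of principal parts: fixing $m \ge |\alpha|$, it uses the identification $D^m_{S|k} \cong \Hom_S(P^m_{S|k}, S)$ and the fact that $P^m_{S|k}$ is free on the $(d^m x)^\beta$ with $\partial^{[\alpha]}$ dual to $(d^m x)^\alpha$. The Taylor expansion $1 \otimes g = \sum_\beta \partial^{[\beta]}(g)\,(d^m x)^\beta$ then reduces the claim to reading off the coefficient of $(d^m x)^\alpha$ in the product $(1 \otimes f)(1 \otimes g)$, which is a one-line computation. Your argument instead stays entirely inside $D_{S|k}$: you extract from the structure theorem a determination principle (an operator vanishing on all $x^\sigma$ must vanish), reduce the lemma to the scalar identity $(\star)$, and run an induction on $|\sigma|$ powered by the commutator identity $[\partial^{[\beta]}, x_i] = \partial^{[\beta - e_i]}$ and Pascal's rule. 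The paper's approach is shorter and exploits the ring structure on $P^m_{S|k}$ that is already lurking behind Theorem~\ref{thm-EGA-presentation}; yours is more self-contained, needing only the output of that theorem and integer binomial identities, and it makes explicit that the argument is uniform in characteristic. Either way the essential content is the same Leibniz-type convolution; you have simply unpacked it combinatorially where the paper packages it algebraically.
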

\begin{proof}
	Fix some integer $m \geq |\alpha|$. We let $J \sq R \otimes_k R$ denote the ideal of the multiplication map $\mu: R \otimes_k R \to R$, and we let $P^m_{R|k} := R \otimes_k R / J^{m+1}$ denote the $m$-th module of principal parts for $R$ over $k$, so that we have an identification $\Hom_S(P^m_{S|k}, S) \cong D^m_{S|k}$, given by $[\phi \mapsto [g \mapsto \phi(1 \otimes g)]]$ \cite[\S16]{EGAIV}. Given $g \in R$ we denote by $d^mg$ the element $d^mg := 1 \otimes g - g \otimes 1 \in P^m_{R|k}$. Then $P^m_{R|k}$ is, as a left $R$-module, free in the basis $\{(d^m x)^\beta : |\beta| \leq m \}$, and $\partial^{[\alpha]}$ is, thought of as an $R$-linear map $\partial^{[\alpha]}: P^n_{R|k} \to R$, the dual of the basis element $(dx)^\alpha$ \cite[\S16]{EGAIV}. In particular, for all $g \in S$ we have an equality $1 \otimes g = \sum_{|\beta| \leq n} \partial^{[\beta]}(g) (d^m x)^\beta$ in $P^m_{S|k}$. 
	
	We need to show that, for all $g \in S$, we have $\partial^{[\alpha]}(fg) = \sum_{\beta + \gamma = \alpha} \partial^{[\beta]}(f) \partial^{[\gamma]}(g)$. This follows by considering the following equalities in $P^m_{R|k}$
	\begin{align*}
	1 \otimes fg & = (1 \otimes f)(1 \otimes g) \\
	& = (\sum_{|\beta| \leq m} \partial^{[\beta]})(f) (d^m x)^\beta) (\sum_{|\gamma| \leq m} \partial^{[\gamma]}(f) (dx)^\gamma) 
	\end{align*}
	and extracting the coefficient of $(d^m x)^\alpha$ of the last expression. 
\end{proof}

\begin{proposition} \label{prop-exists-involn-smoothcase}
	Let $k$ and $S$ be as in Setup \ref{setup-smooth}. Then there is a unique involutive anti-automorphism on $D_{S|k}$ that fixes $S$ and sends $[\partial^{[\alpha]} \mapsto (-1)^{|\alpha|} \partial^{[\alpha]}]$ for all $\alpha \in \N_0^n$.
\end{proposition}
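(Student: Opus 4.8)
My plan is to define the candidate map on the basis from Theorem \ref{thm-EGA-presentation}(2), verify it is a well-defined $k$-linear involution, and then reduce the anti-homomorphism property to the single commutation relation established in Lemma \ref{lemma-commute-partial-f}. By Theorem \ref{thm-EGA-presentation}(2) every element of $D_{S|k}$ is uniquely a finite sum $\sum_\alpha f_\alpha \partial^{[\alpha]}$ with $f_\alpha \in S$, so I would define $\Phi\lp \sum_\alpha f_\alpha \partial^{[\alpha]} \rp := \sum_\alpha (-1)^{|\alpha|} \partial^{[\alpha]} f_\alpha$, where the right-hand side is computed inside $D_{S|k}$ using Lemma \ref{lemma-commute-partial-f} to put it back in the standard form. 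This is manifestly $k$-linear and well-defined because the decomposition is unique; by construction it fixes $S = D^0_{S|k}$ (the $\alpha = 0$ term) and sends $\partial^{[\alpha]} \mapsto (-1)^{|\alpha|} \partial^{[\alpha]}$.

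The real content is checking that $\Phi$ is an anti-homomorphism, i.e. $\Phi(uv) = \Phi(v)\Phi(u)$. Since $\Phi$ is $k$-linear and the elements $f \partial^{[\alpha]}$ span $D_{S|k}$, it suffices to verify this on products of such generators, and in fact it is cleanest to check $\Phi$ on the two types of generators $f \in S$ and $\partial^{[\alpha]}$ separately and confirm the images satisfy the defining relations of $D_{S|k}$. The key relation to preserve is the one from Lemma \ref{lemma-commute-partial-f}: applying $\Phi$ (which reverses order) to $\partial^{[\alpha]} f = \sum_{\beta + \gamma = \alpha} \partial^{[\beta]}(f)\, \partial^{[\gamma]}$ should match $\Phi(f)\Phi(\partial^{[\alpha]}) = (-1)^{|\alpha|} f \partial^{[\alpha]}$ computed via the same lemma. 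The hardest step, and where Lemma \ref{lemma-bin-coeff} enters, is reconciling the signs: expanding $\Phi(\partial^{[\alpha]} f)$ produces a sum $\sum_{\beta + \gamma = \alpha}(-1)^{|\gamma|} \partial^{[\gamma]} \partial^{[\beta]}(f)$, and collecting terms by re-applying Lemma \ref{lemma-commute-partial-f} to each $\partial^{[\gamma]} \partial^{[\beta]}(f)$ yields coefficients that telescope via the alternating binomial identity $\sum_{\beta + \delta = \sigma}(-1)^{|\beta|}\frac{\sigma!}{\beta!\delta!} = 0$ of Lemma \ref{lemma-bin-coeff}, killing all but the intended term.

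For the involution property I would observe that $\Phi^2$ is an anti-automorphism composed with itself, hence an automorphism fixing $S$ and fixing each $\partial^{[\alpha]}$ (since the two sign factors $(-1)^{|\alpha|}$ multiply to $+1$); as these generate $D_{S|k}$, we get $\Phi^2 = \id$. Uniqueness is then immediate: any anti-automorphism fixing $S$ and acting as $(-1)^{|\alpha|}$ on each $\partial^{[\alpha]}$ is forced to agree with $\Phi$ on the spanning set $\{f \partial^{[\alpha]}\}$ by the anti-homomorphism rule, hence everywhere by $k$-linearity. I expect the main obstacle to be purely bookkeeping in the sign computation of the previous paragraph — making the double application of Lemma \ref{lemma-commute-partial-f} and the cancellation from Lemma \ref{lemma-bin-coeff} line up cleanly — rather than any conceptual difficulty.
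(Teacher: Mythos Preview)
Your proposal is correct and follows essentially the same route as the paper: define $\Phi$ on the left $S$-basis $\{\partial^{[\alpha]}\}$ by $\Phi(f\partial^{[\alpha]}) = (-1)^{|\alpha|}\partial^{[\alpha]} f$, reduce the anti-homomorphism check to the single identity $\Phi(\partial^{[\alpha]} f) = (-1)^{|\alpha|} f \partial^{[\alpha]}$, and verify this by a double application of Lemma~\ref{lemma-commute-partial-f} followed by the alternating binomial cancellation of Lemma~\ref{lemma-bin-coeff}; the involution and uniqueness then follow from $\Phi^2$ fixing the generators. The only cosmetic difference is that the paper explicitly records the easy intermediate reductions $\Phi(f\xi)=\Phi(\xi)f$ and $\Phi(\xi\partial^{[\alpha]})=\Phi(\partial^{[\alpha]})\Phi(\xi)$ before isolating the key identity, whereas you fold these into the phrase ``check on generators''.
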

\begin{proof}
	By Theorem \ref{thm-EGA-presentation}, $D_{S|k}$ is free as a left $S$-module with basis $\{\partial^{[\alpha]}: \alpha \in \N_0^n\}$. Therefore there is a unique additive map $\Phi: D_{S|k} \to D_{S|k}$ with $\Phi(f \partial^{[\alpha]}) = (-1)^{|\alpha|} \partial^{[\alpha]} f$ for all $f \in S$ and $\alpha \in \N_0^n$. We want to show that $\Phi$ is a ring anti-homomorphism.
	
	One immediately verifies that, for all $\xi \in D_{S|k}, \alpha \in \N_0^n$ and $f \in S$ we have $\Phi(\xi \partial^{[\alpha]}) = \Phi(\partial^{[\alpha]}) \Phi(\xi)$ and $\Phi(f \xi) = \Phi(\xi) f$. We conclude it suffices to show that $\Phi(\partial^{[\alpha]} f) = f \Phi(\partial^{[\alpha]})$. By using Lemma \ref{lemma-commute-partial-f} we conclude that
	\begin{align*}
	\Phi(\partial^{[\alpha]} f) & = \Phi(\sum_{\beta + \gamma = \alpha}\partial^{[\beta]}(f) \partial^{[\gamma]} ) \\
		& = \sum_{\beta + \gamma = \alpha} (-1)^{|\gamma|} \partial^{[\gamma]} \partial^{[\beta]}(f) \\
		& = \sum_{\beta + \gamma = \alpha} (-1)^{|\gamma|} \sum_{\delta + \epsilon = \gamma} \partial^{[\delta]}( \partial^{[\beta]}(f)) \partial^{[\epsilon]} \\
		& = \sum_{\beta + \delta + \epsilon = \alpha} (-1)^{|\delta + \epsilon|} \frac{(\beta + \delta)!}{\beta! \delta!} \partial^{[\delta + \beta]} (f) \partial^{[\epsilon]} \\
		& = (-1)^{|\alpha|} \sum_{\epsilon \leq \alpha} \lp \sum_{\beta + \delta = \alpha - \epsilon} (-1)^{|\beta|} \frac{(\alpha - \epsilon)!}{\beta! \delta!} \rp \partial^{[\alpha - \epsilon](f) \partial^{[\epsilon]}} \\
		& = (-1)^{|\alpha|} f \partial^{[\alpha]},
	\end{align*}
	where in the last equality we made use of Lemma \ref{lemma-bin-coeff}. This completes the proof of the fact that $\Phi$ is a ring anti-homomorphism. To show that it is an involution we now simply observe that $\Phi^2$ is a ring homomorphism with $\Phi^2(f) = f$ and $\Phi^2(\partial^{[\alpha]}) = \partial^{[\alpha]}$ for every $f \in S$ and $\alpha \in \N_0^n$, and therefore $\Phi^2$ must be the identity.
\end{proof}

\section{Right $D$-module structures on canonical modules} \label{scn-rightD-on-can}

Let $k$ be a field and $R$ be a local $k$-algebra. In this section we show that, under reasonable hypotheses, the canonical module $\omega_R$ carries a compatible right $D_{R|k}$-module structure. This fact will be the main ingredient in our construction of anti-automorphisms on rings of differential operators on Gorenstein algebras. Before we begin, let us clarify  what ``compatible" means in this setting.
\begin{definition} \label{def-comp}
	Let $M$ be an $R$-module. A (right or left) $D_{R|k}$-module structure on $M$ is compatible if it extends the already-existing $R$-module structure. 
\end{definition}

We note that Theorem \ref{thm-omegaR-rightDmodule-1} and Theorem \ref{thm-omegaR-rightDmodule-4} follow from the duality for differential operators developed by Yekutieli \cite[Cor. 4.2]{Yek98}, or by noting that the co-stratification approach of Berthelot \cite[\S 1.2.1]{BerthelotII} (in the case $m = \infty$) also works in the singular setting. Trying to keep an elementary exposition, we prove these results using simpler techniques.

\subsection{Characteristic zero case}

\begin{theorem} \label{thm-omegaR-rightDmodule-1}
	Let $k$ be a field of characteristic zero and $R$ be a local Cohen-Macaulay normal domain that is essentially of finite type over $k$. Then the canonical module $\omega_R$ admits a compatible right $D_{R|k}$-module structure. 
\end{theorem}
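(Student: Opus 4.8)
The plan is to build the right module structure on the regular (= smooth) locus, where the canonical module is the top exterior power of the sheaf of differentials and carries its standard right $\cD$-module structure, and then to extend to all of $\omega_R$ using the $S_2$-property of the canonical module. First I would set $X = \Spec R$ and let $U \sq X$ be the regular locus, with inclusion $j : U \raw X$ and $n = \dim R$. Because $k$ has characteristic zero and $R$ is essentially of finite type over $k$, every regular local ring of $R$ is in fact smooth over $k$, so $U$ is precisely the smooth locus; and because $R$ is a normal domain, the complement $X \setminus U$ has codimension $\geq 2$. On $U$ the restriction of the canonical module agrees with the canonical sheaf $\omega_U \cong \wdg^n \Omega^1_{U|k}$, which is an invertible sheaf.

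Second, I would record the two facts that make the extension work. On the one hand, on the smooth locus $U$ the line bundle $\omega_U$ carries the standard right $\cD_U$-module structure: locally, after choosing coordinates as in Setup \ref{setup-smooth}, derivations act by (minus) the Lie derivative and this extends to all operators, the compatibilities being exactly the ones packaged by the anti-automorphism of Proposition \ref{prop-exists-involn-smoothcase}; these local structures glue to a genuine right $\cD_U$-module structure on $\omega_U$. On the other hand, $\omega_R$ is a maximal Cohen-Macaulay $R$-module, hence satisfies Serre's condition $S_2$; since $X \setminus U$ has codimension $\geq 2$, the canonical restriction map $\omega_R \raw \Gamma(U, \omega_U)$ is an isomorphism.

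Third, I would transport the structure. Since differential operators are local, restriction to $U$ gives a $k$-algebra homomorphism $D_{R|k} \raw \Gamma(U, \cD_U)$, $\xi \mapsto \xi|_U$. Using the identification $\omega_R = \Gamma(U, \omega_U)$, I define the right action of $\xi \in D_{R|k}$ on $\omega \in \omega_R$ by $\omega \cdot \xi := \omega \cdot (\xi|_U)$, where the right-hand side uses the right $\cD_U$-action on global sections. A global section acted on by a global operator is again a global section, so this is well defined; the module axioms hold because they already hold on $U$; and since $D^0_{R|k} = R$ restricts to the multiplication operators, the action of $R \sq D_{R|k}$ recovers the given $R$-module structure, yielding compatibility in the sense of Definition \ref{def-comp}.

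The main obstacle I anticipate is the interface between the second and third steps: verifying cleanly that $\omega_R$ really is the pushforward $\Gamma(U, \omega_U)$ — which rests on the $S_2$/reflexivity of the canonical module together with the codimension bound coming from normality — and that the right $\cD_U$-action genuinely preserves this space of global sections. Once the identification $\omega_R = \Gamma(U, \omega_U)$ is secured, the remaining verifications are formal, since everything is inherited from the honest right $\cD_U$-module structure on the smooth locus.
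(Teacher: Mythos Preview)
Your proposal is correct and follows essentially the same route as the paper: construct the right $\cD$-module structure on the smooth locus $U$ via the Lie derivative, use normality to ensure $X \setminus U$ has codimension $\geq 2$, and then invoke the $S_2$/reflexivity of $\omega_R$ to identify $\omega_R$ with $\Gamma(U,\omega_U)$ and transport the structure back. The only cosmetic difference is that the paper also observes $D^n_{R|k} \cong \Hom_R(P^n_{R|k},R)$ is reflexive, so that $D_{R|k} \to \Gamma(U,\cD_U)$ is itself an isomorphism, whereas you (equivalently) just restrict scalars along this map.
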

\begin{proof}
	Let $U \sq \Spec R$ denote the smooth locus of $R$; by the normality of $R$, its complement has codimension $\geq 2$. By use of the Lie derivative, we have an action of the tangent sheaf $T_U$ on the sheaf of modules $\omega_R|_U$, which extends to a compatible right module structure over the sheaf of differential operators on $U$ \cite[\S 1.2]{HTT} (note: one crucially uses that $k$ has characteristic zero here, since otherwise the tangent sheaf does not generate the sheaf of differential operators). We conclude that $\Gamma(U, \omega_R)$ has a compatible right $\Gamma(U, D_R)$-module structure. 
	
	The module $\omega_R$ is reflexive \cite{Aoyama83} \cite[0AVB]{Stacks}. Since we have an $R$-module isomorphism $D^n_{R|k} \cong \Hom_R (P^n_{R|k}, R)$, where $P^n_{R|k}$ is the $n$-th module of principal parts for $R$ \cite[\S16]{EGAIV}, we conclude that $D^n_{R|k}$ is also a reflexive $R$-module \cite[0AV6, 0AVB]{Stacks}. Therefore, the natural maps $\omega_R \to \Gamma(U, \omega_R)$ and $D_R \to \Gamma(U, D_R)$ are isomorphisms. It follows that $\omega_R$ has a compatible right $D_R$-module structure, as required.
\end{proof}
\begin{remark}
	 Note that neither the Cohen-Macaulay nor the local hypotheses were really needed in the proof of Theorem \ref{thm-omegaR-rightDmodule-1}: if we remove them, the only issue is that we need to make sense of what $\omega_R$ is. Since $R$ is normal, the smooth locus $U \sq \Spec R$ has codimension at least two, and if we define $\omega_R$ to be the unique reflexive extension of the canonical bundle on $U$ then the proof given goes through verbatim.
\end{remark}

\subsection{Characteristic $p > 0$ case}

We will tackle the case of characteristic $p>0$ next, and we begin by setting up some notation.

Let $k$ be a perfect field of characteristic $p > 0$ and $R$ be an $F$-finite $k$-algebra. We denote by $F: R \to R$ the Frobenius morphism (i.e. $F(r) = r^p$). Given a positive integer $e$, we denote by $F^e$ the $e$-th iteration of $F$, and we let $R^{p^e}$ be the subring of $p^e$-th powers of $R$. Given $e$ we also define the set $F^e_* R: = \{F^e_* r : r \in R\}$, which we endow with abelian group structure $F^e_* r + F^e_* s = F^e_* (r + s)$ and an $(R, R)$-bimodule structure given by $f \cdot F^e_* r = F^e_* f^{p^e} r$ and $(F^e_*r)\cdot f := F^e_*R(rf)$ --- i.e. as a right $R$-module $F^e_*R $ is (isomorphic to) $R$, and as a left $R$-module $F^e_* R$ is (isomorphic to) the restriction of scalars of $R$ across $F^e$.

Recall that in our setting the ring $D_{R|k}$ of $k$-linear differential operators on $R$ is given by $D_{R|k} = \bigcup_{e = 0}^\infty D^{(e)}_R,$ where $D^{(e)}_R = \End_{R^{p^e}}(R)$ (see \ref{subsubscn-rodo-charp}). The inclusion $R^{p^e} \sq R$ and the Frobenius map $R \to F^e_* R$ $[r \mapsto F^e_* r^{p^e}]$ are isomorphic, in the sense that the diagram
$$\begin{tikzcd}
R \arrow[r, "r \mapsto F^e_* r^{p^e}"] \arrow[d, "r \mapsto r^{p^e}"] & F^e_* R \arrow[d, "F^e_* r \mapsto r"]\\
R^{p^e} \arrow[r, hook] & R . 
\end{tikzcd}$$
commutes, and we can thus identify $D^{(e)}_{R}$ with
$$D^{(e)}_{R} \cong \End_R (F^e_* R),$$
where $\End_R(F^e_* R)$ is the endomorphism ring of $F^e_* R$ as a left $R$-module. 

If $M$ is an $R$-module we let
$$(F^e)^\flat M := \Hom_R(F^e_* R, M)$$
be the set of left $R$-linear homomorphisms from $F^e_* R$ to $M$, endowed with an $R$-module structure given by 
$$(f \cdot \varphi)(F^e_* r) := \varphi\big(F^e_* (rf) \big) \hspace*{15pt} (\varphi \in (F^e)^\flat M \text{ and }  r, f \in R);$$
that is, the structure inherited from the right $R$-module structure on $F^e_* R$. This construction respects morphisms and therefore induces functors $(F^e)^\flat$ on $R$-modules. 

We note that for all $e, d> 0$ there are bimodule isomorphisms $F^d_* R \otimes_R F^e_* R \cong F^{d + e}_* R$ given by $[F^d_* a \otimes F^e_* b \mapsto F^{d + e}_* a^{p^e} b]$ which, by the tensor-Hom adjunction, give isomorphisms
$$(F^{d })^\flat (F^e)^\flat M \cong (F^{d + e})^\flat M$$
which induce natural equivalences of functors.

\begin{theorem} \label{thm-omegaR-rightDmodule-2}
	Let $k$ be a perfect field of characteristic $p>0$ and $R$ be a local Cohen-Macaulay $F$-finite $k$-algebra with canonical module $\omega_R$. Then $\omega_R$ admits a compatible right $D_{R|k}$-module structure. 
\end{theorem}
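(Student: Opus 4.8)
The goal is to construct a compatible right $D_{R|k}$-module structure on $\omega_R$ in the $F$-finite characteristic $p$ setting. The plan is to exploit the identification $D^{(e)}_R \cong \End_R(F^e_* R)$ and to produce the canonical module as the $R$-module obtained by applying the functors $(F^e)^\flat$ to a suitable module, so that the level-$e$ differential operators act naturally on it via functoriality.

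\medskip

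\textbf{Setup via the dualizing module.} First I would recall that for an $F$-finite Cohen-Macaulay local ring the canonical module can be described Frobenius-theoretically. The key fact is that the functor $(F^e)^\flat = \Hom_R(F^e_* R, -)$ applied to $R$ computes $F^e_*$ of the canonical module, up to the twisting built into the left $R$-module structure; more precisely, in the Cohen-Macaulay $F$-finite setting one has a natural isomorphism $(F^e)^\flat \omega_R \cong F^e_* \omega_R$ coming from Grothendieck-Serre duality for the finite flat-in-the-Frobenius-sense map $F^e$ (this is the standard ``$\omega_R$ is a canonical/dualizing module for the Frobenius pushforward'' statement, and it is where $F$-finiteness and the existence of $\omega_R$ are used). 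I would state this as the one external input and cite the $F$-finiteness machinery.

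\medskip

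\textbf{Producing the action.} An element $\xi \in D^{(e)}_R = \End_R(F^e_* R)$ is a left $R$-linear endomorphism of $F^e_* R$. The functor $(F^e)^\flat = \Hom_R(F^e_* R, -)$ is contravariant in its first argument, so $\xi$ induces, for any $R$-module $M$, an $R$-linear map $(F^e)^\flat(M) \to (F^e)^\flat(M)$ by precomposition, $\varphi \mapsto \varphi \circ \xi$. Taking $M = \omega_R$ and transporting through the duality isomorphism $(F^e)^\flat \omega_R \cong F^e_* \omega_R$ of the previous step, every $\xi \in D^{(e)}_R$ acts on $F^e_* \omega_R$, hence on $\omega_R$ itself as an abelian group endomorphism. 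Because precomposition reverses the order of composition, $(\varphi \circ \xi) \circ \eta = \varphi \circ (\eta \xi)$, this is on its face a \emph{right} action, which is exactly what is wanted. The central verification is then that these level-by-level actions are compatible as $e$ grows: using the natural equivalence $(F^{d})^\flat (F^e)^\flat \cong (F^{d+e})^\flat$ recorded in the excerpt, together with the multiplicativity of the duality isomorphisms, I would check that the action of $D^{(e)}_R$ agrees with the restriction of the action of $D^{(e+1)}_R$ under the inclusion $D^{(e)}_R \hookrightarrow D^{(e+1)}_R$. Since $D_{R|k} = \bigcup_e D^{(e)}_R$, these glue to a single right $D_{R|k}$-action.

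\medskip

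\textbf{Compatibility and the main obstacle.} Finally I would confirm the action is \emph{compatible} in the sense of Definition \ref{def-comp}, i.e. that multiplication by $f \in R = D^0_R$ recovers the original $R$-module structure on $\omega_R$; this amounts to unwinding the definitions of the left $R$-module structure on $(F^e)^\flat M$ and checking it matches the $f \cdot \varphi$ formula given before the theorem, which is routine. I expect the genuine obstacle to be neither the functoriality (formal) nor the compatibility check (routine), but rather pinning down the duality isomorphism $(F^e)^\flat \omega_R \cong F^e_* \omega_R$ and verifying that these isomorphisms are \emph{natural} and \emph{transitive} across the equivalences $(F^{d})^\flat(F^e)^\flat \cong (F^{d+e})^\flat$ — that is, that the chosen identifications cohere so that the level-$e$ actions really do assemble into a single well-defined action rather than a compatible-only-up-to-isomorphism family. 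Handling this cleanly likely requires fixing one uniform normalization of the duality isomorphism (for instance via a trace/evaluation map for the finite morphism $F^e$) and tracking it through the bimodule isomorphism $F^d_* R \otimes_R F^e_* R \cong F^{d+e}_* R$.
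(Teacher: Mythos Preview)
Your proposal is correct and is essentially the paper's own argument: the paper uses precisely the right $D^{(e)}_R$-action on $(F^e)^\flat \omega_R$ by precomposition, glues across levels via the natural equivalences $(F^d)^\flat (F^e)^\flat \cong (F^{d+e})^\flat$, and then realizes $\omega_R$ as the direct limit $\varinjlim_e (F^e)^\flat \omega_R$. The only cosmetic difference is that the paper states the duality input as an $R$-module isomorphism $\omega_R \cong F^\flat \omega_R$ (iterated to $\omega_R \cong (F^e)^\flat \omega_R$, citing Bruns--Herzog) rather than your $(F^e)^\flat \omega_R \cong F^e_*\omega_R$; with the paper's choice of $R$-structure on $(F^e)^\flat$ this packaging makes the compatibility with the $R$-action immediate and is exactly the normalization issue you anticipate in your final paragraph.
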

\begin{proof}
	There is an $R$-module isomorphism $\omega_R \cong F^\flat \omega_R$ \cite[Thm. 3.3.7(b)]{BrunsHerzog} which, in turn, induces isomorphisms $\omega_R \cong (F^{e})^\flat \omega_R$ for every $e>0$. By the description of $(F^e)^\flat$, we observe that $(F^e)^\flat \omega_R$ --- and, indeed, any $(F^e)^\flat M$ --- inherits a natural right-module structure over the ring $\End_R (F^e_* R)$, and thus over the ring $D^{(e)}_R$. Since the isomorphisms $(F^e)^\flat \omega_R \to (F^{e + 1})^\flat \omega_R$ are $D^{(e)}_R$-linear, the module
	$$\omega_R \cong \lim_{\to e} (F^e)^\flat \omega_R$$
	inherits a right $D_R$-module structure. Since the maps $\omega_R \to (F^e)^\flat \omega_R$ are $R$-module maps, this right $D_R$-module structure is compatible. 
\end{proof}
\begin{remark}
	With the appropriate replacements for $\omega_R$ we do not need the local hypothesis in Theorem \ref{thm-omegaR-rightDmodule-2}. Our proof shows that if $R$ is equidimensional and essentially of finite type over $k$, $i: \Spec R \to \Spec k$ is the structure map and $\omega_R = i^!k [-\dim R]$ then $\omega_R$ carries a natural compatible right $D_{R|k}$-module structure.
\end{remark}

\subsection{Complete case}

We now tackle the complete case, which is an easy application of Matlis duality for $D$-modules. This duality was developed by Switala (similar constructions appear in the work of Yekutieli \cite{Yek95}). We will quickly summarize the key facts, and we refer to Switala's work \cite[\S 4]{Switala17} or thesis \cite{Swi} for details. 

Let $R$ be a Cohen-Macaulay complete local ring with coefficient field $k$. Let $\fm$ be the maximal ideal of $R$ and, by an abuse of notation, we will denote the residue field $R/\fm$ by $k$. Given a (not necessarily finitely generated) $R$-module $M$, a $k$-linear map $\phi: M \to k$ is called $\Sigma$-continuous if for every $u \in M$ there exists some $s > 0$ such that $\phi(\fm^s u) = 0$. We denote by $\Hom_k^\Sigma(M,k)$ the collection of all such maps, which has an $R$-module structure by premultiplication. We note that if $M$ is such that $\Supp(M) = \{\fm\}$ then every $k$-linear map is $\Sigma$-continuous.

Let $E = E_R(k)$ be the injective envelope of $k$, and fix a $k$-linear splitting $\sigma$ of the inclusion $k \hookrightarrow E$. Given an arbitrary $R$-module $M$, the map
\begin{align*}
\Hom_R (M, E) & \rightarrow  \Hom^\Sigma_k(M,k) \\
\psi & \mapsto  \sigma \circ \psi
\end{align*}
gives an isomorphism between $\Hom_R(M, E)$ (the Matlis dual of $M$) and the module $\Hom_k^\Sigma(M,k)$ which, in fact, gives a natural equivalence of functors. 

\begin{theorem} \label{thm-omegaR-rightDmodule-3}
	Let $R$ be a Cohen-Macaulay complete local ring with coefficient field $k$. Then the canonical module $\omega_R$ of $R$ admits a compatible right $D_{R|k}$-module structure.  
\end{theorem}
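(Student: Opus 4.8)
The plan is to realize $\omega_R$ as the Matlis dual of the top local cohomology module $H^d_\fm(R)$, where $d = \dim R$, and then to exploit the fact that Matlis duality exchanges left and right $D_{R|k}$-modules. Since $R$ is a complete local Cohen-Macaulay ring it admits a canonical module, and Grothendieck local duality \cite{BrunsHerzog} gives a natural $R$-module isomorphism $\omega_R \cong \Hom_R(H^d_\fm(R), E)$, where $E = E_R(k)$. The module $H^d_\fm(R)$ carries a canonical \emph{left} $D_{R|k}$-module structure: writing $\fm = (f_1, \ds, f_r)$ and computing local cohomology via the \v{C}ech complex on the $f_i$, each localization $R_{f_{i_1} \cds f_{i_j}}$ is a left $D_{R|k}$-module (differential operators extend uniquely to localizations) and the \v{C}ech differentials are $D_{R|k}$-linear, so the cohomology inherits a left $D_{R|k}$-action. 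The goal is then to dualize this into a compatible right action on $\omega_R$.

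The key mechanism is that the $k$-linear dual of a left module over any ring is a right module. Concretely, for a left $D_{R|k}$-module $M$ and $\phi \in \Hom_k(M, k)$ I would set $(\phi \cdot \delta)(m) := \phi(\delta m)$ for $\delta \in D_{R|k}$ and $m \in M$; associativity of the $D_{R|k}$-action on $M$ immediately yields $(\phi \cdot \delta_1) \cdot \delta_2 = \phi \cdot (\delta_1 \delta_2)$, so this is a genuine right $D_{R|k}$-module structure. Applying this to $M = H^d_\fm(R)$, I note that $\Supp(H^d_\fm(R)) = \{\fm\}$, so by the remark recalled above every $k$-linear functional on $H^d_\fm(R)$ is automatically $\Sigma$-continuous; hence $\Hom_k(H^d_\fm(R), k) = \Hom^\Sigma_k(H^d_\fm(R), k)$, and Switala's natural isomorphism identifies this with $\Hom_R(H^d_\fm(R), E) \cong \omega_R$ as $R$-modules. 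Transporting the right action across this chain of isomorphisms endows $\omega_R$ with a right $D_{R|k}$-module structure.

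It remains to verify compatibility, and this is where the only real care is needed. Restricting the right action to the subring $R \sq D_{R|k}$ gives $(\phi \cdot f)(m) = \phi(fm)$ for $f \in R$, while the $R$-module structure on $\Hom^\Sigma_k(H^d_\fm(R), k)$ transported from Matlis duality is premultiplication, $(f \cdot \phi)(m) = \phi(fm)$; since $R$ is commutative these agree, so the right $D_{R|k}$-module structure extends the $R$-module structure and is compatible in the sense of Definition \ref{def-comp}. I expect the main obstacle to be bookkeeping rather than conceptual: one must check that the isomorphisms of local duality and of Switala's Matlis duality are compatible with the respective module structures closely enough that the right $D_{R|k}$-action descends to the intrinsic $R$-module structure on $\omega_R$, and that this elementary dualization coincides with the side-change furnished by Switala's $D$-module duality. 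Because the support of $H^d_\fm(R)$ is a single point, the $\Sigma$-continuity subtleties disappear and these checks reduce to the commutative identity above.
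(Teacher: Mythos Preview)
Your proposal is correct and follows essentially the same approach as the paper's own proof: realize $\omega_R$ as the Matlis dual of $H^d_\fm(R)$, use that the support of $H^d_\fm(R)$ is $\{\fm\}$ so that Switala's identification $\Hom_R(-,E) \cong \Hom_k^\Sigma(-,k)$ becomes simply $\Hom_k(-,k)$, equip $H^d_\fm(R)$ with its left $D_{R|k}$-structure via the \v{C}ech complex, and dualize by precomposition to obtain a compatible right $D_{R|k}$-structure on $\omega_R$. The paper's proof is more terse but the content is identical; your additional compatibility check for the $R$-action is exactly the point the paper leaves implicit.
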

\begin{proof}
	Since $R$ is complete, the canonical module $\omega_R$ is the Matlis dual of the top local cohomology module $H^d_\fm (R)$ and, since $\Supp H^d_\fm (R) = \{\fm\}$, we have
	$$\omega_R = \Hom_k(H^d_\fm( R), k).$$
	Since $H^d_\fm (R)$ has a compatible left $D_{R|k}$-module structure (via the \v{C}ech complex), $\omega_R$ has a compatible right $D_{R|k}$-module structure, given by premultiplication. 
\end{proof}
\subsection{Graded case}
Finally, we consider the graded case. The strategy is identical to the one for the complete case, except in that we use graded versions of all the relevant constructions. 

Suppose that $R = \bigoplus_{n = 0}^\infty R_n$ is a graded ring which is finitely generated over a field $k = R_0$. Whenever we talk about the canonical module of such a graded ring $R$ we will always mean the *canonical module in the sense of Bruns and Herzog \cite[\S3.6]{BrunsHerzog}. In particular, it is a graded module which is uniquely determined up to graded isomorphism \cite[Prop.3.6.9]{BrunsHerzog}. 

Given a graded $R$-module $M = \bigoplus_{i \in \Z} M_i$, its graded Matlis dual is given by
$$\stHom_k (M, k) = \bigoplus_{i \in \Z} \Hom_k(M_{-i}, k),$$
which has a natural structure of graded $R$-module. Whenever the module $M$ has a left $D_{R|k}$-module structure, its graded Matlis dual acquires a compatible right $D_{R|k}$-module structure by precomposition \cite{SwiZh} \cite[Prop. 3.1]{Jeffries-derived_functors}.
\begin{theorem} \label{thm-omegaR-rightDmodule-4}
	Let $R = \bigoplus_{n = 0}^\infty R_n$ be a Cohen-Macaulay graded ring which is finitely generated over a field $k = R_0$. Then the canonical module $\omega_R$ admits a compatible right $D_R$-module structure. 
\end{theorem}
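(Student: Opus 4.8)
The plan is to mirror the proof of Theorem \ref{thm-omegaR-rightDmodule-3}, replacing Matlis duality by its graded counterpart throughout. Write $\fm = \bigoplus_{n > 0} R_n$ for the irrelevant maximal ideal and set $d = \dim R$. The three ingredients to assemble are: a graded duality identification of $\omega_R$, a compatible left $D_{R|k}$-module structure on top local cohomology, and the graded Matlis duality statement recalled just before the theorem.

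First I would invoke graded local duality to identify the canonical module as a graded dual of top local cohomology. Since $R$ is Cohen-Macaulay of dimension $d$, graded local duality (Bruns--Herzog \cite[\S3.6]{BrunsHerzog}) provides a graded isomorphism $\omega_R \cong \stHom_k(H^d_\fm(R), k)$. Because $\Supp H^d_\fm(R) = \{\fm\}$, every graded piece of $H^d_\fm(R)$ is a finite-dimensional $k$-vector space, so here the graded Matlis dual $\stHom_k(-,k)$ coincides with the naive graded $k$-dual and no $\Sigma$-continuity subtlety intervenes.

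Next I would equip $H^d_\fm(R)$ with a compatible left $D_{R|k}$-module structure. The ring $R$ is tautologically a left $D_{R|k}$-module, and since $R$ is graded this action is by homogeneous operators. Computing $H^d_\fm(R)$ via the \v{C}ech complex on a homogeneous system of parameters, each differential operator extends to the relevant localizations and commutes with the \v{C}ech differentials, so $H^d_\fm(R)$ inherits a compatible left $D_{R|k}$-module structure, and by homogeneity this is a \emph{graded} structure. Finally I would apply the graded Matlis duality statement recalled before the theorem: whenever a graded module $M$ carries a left $D_{R|k}$-module structure, its graded Matlis dual $\stHom_k(M,k)$ acquires a compatible right $D_{R|k}$-module structure by precomposition \cite{SwiZh} \cite[Prop. 3.1]{Jeffries-derived_functors}. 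Applying this to $M = H^d_\fm(R)$ and transporting along the isomorphism of the previous paragraph yields the desired compatible right $D_{R|k}$-module structure on $\omega_R$.

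The routine parts are the verification that the \v{C}ech computation produces a genuine graded $D$-module and that the graded local duality isomorphism is $R$-linear (hence compatible). The one point I expect to warrant care is \emph{compatibility} of the resulting right action on $\omega_R$: one must check that the precomposition action extends the ambient $R$-module structure. This follows because the left $D$-action on $H^d_\fm(R)$ extends multiplication by $R$, and precomposing a functional with multiplication by $r \in R$ reproduces exactly the $R$-action on the dual; transporting this across the graded duality isomorphism then gives the compatibility required by Definition \ref{def-comp}.
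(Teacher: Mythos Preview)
Your proposal is correct and follows essentially the same approach as the paper: identify $\omega_R$ with the graded Matlis dual of $H^d_\fm(R)$ via \cite[Thm.~3.6.19]{BrunsHerzog}, equip $H^d_\fm(R)$ with its left $D_{R|k}$-module structure from the \v{C}ech complex, and dualize using the graded duality statement recalled just before the theorem. You have simply filled in more detail than the paper does (the compatibility check, the remark on $\Sigma$-continuity), but the strategy is identical.
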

\begin{proof}
	The canonical module $\omega_R$ is the graded Matlis dual of the top local cohomology module $H^d_\fm (R)$ \cite[Thm. 3.6.19]{BrunsHerzog}. Since $H^d_\fm(R)$ is a left $D_{R|k}$-module (via the \v{C}ech complex), $\omega_R$ acquires a compatible right $D_{R|k}$-module structure. 
\end{proof}

\section{Symmetry on rings of differential operators} \label{scn-symmetry}

\subsection{Existence of anti-automorphisms on $D_{R|k}$ for Gorenstein algebras}

The goal of this section is to prove Theorem \ref{thm-antiauto-on-Gor}. We begin with a few lemmas that hold for arbitrary commutative rings $k$ and $R$. 

\begin{lemma} \label{lemma-mult_is_diff_op}
	Let $k$ be a commutative ring and $R$ be a commutative $k$-algebra. Suppose that $R$ carries a compatible right $D_{R|k}$-module structure. If $\xi \in D_{R|k}$ has order $\leq n$ then the $k$-linear operator $\xi^\ast$ given by $\xi^\ast(f) = f \cdot \xi$ for all $f \in R$ is also a differential operator of order $\leq n$.
\end{lemma}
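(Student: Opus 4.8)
The plan is to prove the statement by induction on the order $n$, after first establishing that the assignment $\xi \mapsto \xi^\ast$ is a ring anti-homomorphism from $D_{R|k}$ into $\End_k(R)$. For the anti-homomorphism property I would fix $\xi, \eta \in D_{R|k}$ and compute directly from the right-module axioms: for every $f \in R$ one has $(\xi\eta)^\ast(f) = f \cdot (\xi\eta) = (f \cdot \xi) \cdot \eta = \eta^\ast(\xi^\ast(f))$, so that $(\xi\eta)^\ast = \eta^\ast \circ \xi^\ast$ as operators. Additivity and $k$-linearity of the assignment $\ast$ are immediate from the module structure, and compatibility (Definition \ref{def-comp}) gives $g^\ast = g$ for every $g \in R = D^0_{R|k}$, since $g^\ast(f) = f \cdot g = fg$.

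The base case $n = 0$ then follows at once: if $\xi \in D^0_{R|k} = R$, then $\xi^\ast = \xi$ is again multiplication by an element of $R$, hence lies in $D^0_{R|k}$. For the inductive step I would assume the result for operators of order $\leq n-1$ and take $\xi \in D^n_{R|k}$. The key computation is the commutator identity
$$[\xi^\ast, g] = \xi^\ast g - g \xi^\ast = (g\xi)^\ast - (\xi g)^\ast = -[\xi, g]^\ast,$$
valid for every $g \in R$, which uses the anti-homomorphism property together with $g^\ast = g$. By the definition (\ref{eqn-Dn}) of the order filtration, $\xi \in D^n_{R|k}$ gives $[\xi, g] \in D^{n-1}_{R|k}$; the inductive hypothesis then yields $[\xi, g]^\ast \in D^{n-1}_{R|k}$, whence $[\xi^\ast, g] \in D^{n-1}_{R|k}$ for all $g \in R$. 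By (\ref{eqn-Dn}) again this means precisely $\xi^\ast \in D^n_{R|k}$, completing the induction.

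The only genuine subtlety — and the point the induction is engineered to handle — is that $\xi^\ast$ is a priori merely an element of $\End_k(R)$, so one cannot simply assert it is a differential operator. The filtered induction circumvents this: at each stage the commutator identity reduces membership of $\xi^\ast$ in $D^n_{R|k}$ to membership of the strictly lower-order operator $[\xi,g]^\ast$ in $D^{n-1}_{R|k}$, which is already known. I expect no serious obstacle beyond this bookkeeping; the one thing to double-check is that the identity $(\xi\eta)^\ast = \eta^\ast \circ \xi^\ast$ is a legitimate equality in $\End_k(R)$ that may be invoked even before knowing $\xi^\ast, \eta^\ast \in D_{R|k}$, which it is, since the composition of $k$-linear operators is always defined.
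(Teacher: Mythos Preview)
Your proposal is correct and follows essentially the same route as the paper: induction on $n$, with the base case supplied by compatibility and the inductive step driven by the identity $[\xi^\ast,g] = -[\xi,g]^\ast$. The paper derives this identity directly from the two equalities $\xi^\ast f = (f\xi)^\ast$ and $f\xi^\ast = (\xi f)^\ast$, while you package those as a special case of the anti-homomorphism property $(\xi\eta)^\ast = \eta^\ast\xi^\ast$; this is merely a cosmetic difference.
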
 
\begin{proof}
	We prove this by induction on $n$, with the case $n = 0$ following from the compatibility of the right $D_{R|k}$-module structure. Therefore, suppose that the statement holds for $n = m - 1$, and let $f \in R$. We observe that $\xi^* f = (f \xi)^*$ and $f \xi^* = (\xi f)^*$, from which it follows that 
	$$[\xi^*, f] = - [\xi, f]^*.$$
	Since $\xi$ has order $\leq m$, $[\xi, f]$ has order $\leq m-1$  which, together with the inductive hypothesis, implies that $[\xi^*, f]$ has order $\leq m -1$. Since $f \in R$ was arbitrary, we conclude that $\xi^*$ has order $\leq m$, as required. 
\end{proof}

\begin{lemma} \label{lemma-homo-is-iso}
	Let $k$ be a commutative ring, $R$ be a commutative $k$-algebra and $\Phi: D_{R|k} \to D_{R|k}$ be a ring anti-homomorphism that fixes $R$. Then $\Phi$ is an anti-isomorphism. 
\end{lemma}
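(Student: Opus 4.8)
The plan is to exploit the order filtration $\{D^n_{R|k}\}$ and to show that $\Phi$ is a filtered map which acts as multiplication by $\pm 1$ on each associated graded piece, from which bijectivity follows formally. The starting point is a single commutator identity. Since $\Phi$ is an anti-homomorphism fixing $R$, for every $\xi \in D_{R|k}$ and $f \in R$ we have $\Phi(\xi f) = \Phi(f)\Phi(\xi) = f\,\Phi(\xi)$ and $\Phi(f\xi) = \Phi(\xi)\Phi(f) = \Phi(\xi)f$, whence
\[
\Phi([\xi, f]) = -[\Phi(\xi), f].
\]
This relation is the engine of the whole argument.

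Next I would prove, by induction on $n$, the refined statement that for every $\xi \in D^n_{R|k}$ one has $\Phi(\xi) - (-1)^n \xi \in D^{n-1}_{R|k}$ (using the convention $D^{-1}_{R|k} = \{0\}$ from Remark \ref{rmk-order-minus1}). The base case $n = 0$ is exactly the hypothesis that $\Phi$ fixes $R = D^0_{R|k}$. For the inductive step, set $\psi := \Phi(\xi) - (-1)^n \xi$ and compute $[\psi, f]$ for $f \in R$: the commutator identity gives $[\Phi(\xi), f] = -\Phi([\xi,f])$, and since $[\xi, f] \in D^{n-1}_{R|k}$ the inductive hypothesis yields $\Phi([\xi,f]) \equiv (-1)^{n-1}[\xi,f] \pmod{D^{n-2}_{R|k}}$. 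Substituting, $[\psi, f] \equiv (-1)^n[\xi,f] - (-1)^n[\xi,f] = 0 \pmod{D^{n-2}_{R|k}}$, so $[\psi,f] \in D^{n-2}_{R|k}$ for every $f$, i.e. $\psi \in D^{n-1}_{R|k}$. In particular $\Phi$ preserves the order filtration and induces multiplication by $(-1)^n$ on each quotient $D^n_{R|k}/D^{n-1}_{R|k}$.

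Finally I would deduce bijectivity. For injectivity, if $\xi \neq 0$ lay in $\ker\Phi$, I pick the minimal $n$ with $\xi \in D^n_{R|k}$; then $0 = \Phi(\xi) \equiv (-1)^n \xi \pmod{D^{n-1}_{R|k}}$ forces $\xi \in D^{n-1}_{R|k}$, contradicting minimality. For surjectivity, I argue by induction on order that $D^n_{R|k} \sq \im\Phi$: given $\eta \in D^n_{R|k}$, the graded statement shows $\Phi\big((-1)^n\eta\big) = (-1)^n\Phi(\eta) \equiv \eta \pmod{D^{n-1}_{R|k}}$, so $\eta - \Phi\big((-1)^n\eta\big) \in D^{n-1}_{R|k} \sq \im\Phi$ by induction, and hence $\eta \in \im\Phi$. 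The only genuine content is the filtered computation in the inductive step of the second paragraph; the passage from ``isomorphism on graded pieces'' to ``isomorphism'' is the routine consequence of the filtration being exhaustive ($D_{R|k} = \bigcup_n D^n_{R|k}$) and separated. I expect this filtered induction to be the main obstacle only in bookkeeping, since no structural knowledge of $\gr D_{R|k}$ is required, keeping the argument entirely elementary.
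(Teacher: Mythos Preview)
Your proof is correct and follows essentially the same route as the paper: both establish the commutator identity $[\Phi(\xi),f] = -\Phi([\xi,f])$ and use it in an induction showing $\Phi(\xi) - (-1)^n\xi \in D^{n-1}_{R|k}$, so that $\gr\Phi$ acts by $(-1)^n$ on degree $n$ and bijectivity follows. The only packaging differences are that the paper first invokes Lemma~\ref{lemma-mult_is_diff_op} to get filtration-preservation separately (whereas you absorb this into the single induction), and the paper cites the standard ``isomorphism on associated graded implies isomorphism'' fact rather than spelling out injectivity and surjectivity explicitly as you do.
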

\begin{proof}
	By Lemma \ref{lemma-mult_is_diff_op}, $\Phi$ respects the order filtration. We now claim that if $\xi \in D_{R|k}$ has order $\leq n$ then the operator $\Phi(\xi) + (-1)^{n+1} \xi$ has order $\leq n - 1$. The claim being clear for $n = 0$ (see Remark \ref{rmk-order-minus1}), we assume that it is true for $n = m-1$. Suppose that $\xi$ has order $\leq m$ and let $f \in R$. We observe that
	\begin{align*}
	[\Phi(\xi) + (-1)^{m+1} \xi, f] & = [\Phi(\xi), f] + (-1)^{m+1} [\xi, f] \\	
	& = - \Phi([\xi, f]) + (-1)^{m+1} [\xi, f] \\
	& = - \lp \Phi([\xi, f]) + (-1)^m [\xi, f] \rp
	\end{align*}
	and we therefore conclude that $[\xi + (-1)^{n+1} \Phi(\xi), f]$ has order $\leq m - 2$. As $f \in R$ was arbitrary, $\xi + (-1)^{n+1} \Phi(\xi)$ has order $\leq m - 1$, which proves the claim. 
	
	The claim implies that the associated graded map $\gr(\Phi): \gr D_{R|k} \to \gr D_{R|k}$ is given by multiplication by $-1$ on odd degrees and the identity on even degrees and that, in particular, it is an isomorphism (note that $\gr D_{R|k}$ is a commutative ring). We conclude that $\Phi: D_{R|k} \to D_{R|k}$ is bijective as well. 	
\end{proof}

\begin{lemma} \label{lemma-abstract-stuff}
	Let $k$ be a commutative ring and $R$ be a commutative $k$-algebra. 
	\begin{enumerate}[(a)]
		\item There is a one-to-one correspondence between the compatible right $D_{R|k}$-module structures on $R$ and the anti-automorphisms on $D_{R|k}$ that fix $R$.
		\item Every anti-automorphism on $D_{R|k}$ that fixes $R$ respects the order filtration.
		\item If $k$ is a perfect field of positive characteristic and $R$ is $F$-finite then every anti-automorphism on $D_{R|k}$ that fixes $R$ respects the level filtration.
	\end{enumerate}
\end{lemma}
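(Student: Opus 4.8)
The plan is to handle the three parts in order, building each on the previous. For part (a), I would construct the correspondence in both directions and check they are mutually inverse. Given a compatible right $D_{R|k}$-module structure on $R$, define $\Phi(\xi) := \xi^*$, where $\xi^*(f) = f \cdot \xi$ as in Lemma \ref{lemma-mult_is_diff_op}; that lemma already guarantees $\xi^* \in D_{R|k}$ with the right order bound, so $\Phi$ lands in $D_{R|k}$. The assignment $\xi \mapsto \xi^*$ reverses multiplication because $(\xi \eta)^*(f) = f \cdot (\xi \eta) = (f \cdot \xi) \cdot \eta = \eta^*(f \cdot \xi) = (\eta^* \xi^*)(f)$, so $\Phi$ is a ring anti-homomorphism; it fixes $R$ by compatibility of the module structure; and by Lemma \ref{lemma-homo-is-iso} it is automatically an anti-isomorphism, hence an anti-automorphism. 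Conversely, given an anti-automorphism $\Psi$ fixing $R$, define a right action by $f \cdot \xi := \Psi(\xi)(f)$; the anti-multiplicativity of $\Psi$ makes this a genuine right action, and $\Psi$ fixing $R$ makes it compatible. A direct check shows these two constructions are inverse to each other, establishing the bijection.

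Part (b) is then immediate from the machinery already in place: every anti-automorphism fixing $R$ arises, via part (a), as $\xi \mapsto \xi^*$ for some compatible right module structure, and Lemma \ref{lemma-mult_is_diff_op} says precisely that $\xi^*$ has order $\leq n$ whenever $\xi$ does. Alternatively, and perhaps cleaner to state, one invokes Lemma \ref{lemma-homo-is-iso}, whose proof already shows that any ring anti-homomorphism fixing $R$ respects the order filtration. I would simply cite whichever of these is most economical.

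For part (c), the goal is to show any anti-automorphism $\Phi$ fixing $R$ satisfies $\Phi(D^{(e)}_R) = D^{(e)}_R$ for each $e$. The key is the intrinsic characterization $D^{(e)}_R = \End_{R^{p^e}}(R)$, so I want to show $\Phi$ preserves $R^{p^e}$-linearity. The natural route is to characterize level $\leq e$ operators algebraically in terms that $\Phi$ manifestly respects. Since $\xi \in D^{(e)}_R$ iff $\xi$ commutes with multiplication by every element of $R^{p^e}$ (viewing $R^{p^e} \subseteq R \subseteq D_{R|k}$), and since $\Phi$ fixes $R$ pointwise hence fixes each $g \in R^{p^e}$, I would argue that for $\xi \in D^{(e)}_R$ and any $g \in R^{p^e}$ we have $[\Phi(\xi), g] = -\Phi([\xi, g]) = -\Phi(0) = 0$, using that $[\xi,g]=0$ for $\xi$ being $R^{p^e}$-linear and $g \in R^{p^e}$, together with the anti-homomorphism identity $\Phi([\xi,g]) = \Phi(g)\Phi(\xi) - \Phi(\xi)\Phi(g) = [g, \Phi(\xi)] = -[\Phi(\xi),g]$. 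Hence $\Phi(\xi)$ also commutes with all of $R^{p^e}$, so $\Phi(\xi) \in D^{(e)}_R$; applying the same to $\Phi^{-1}$ gives the reverse inclusion and thus equality.

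The main obstacle I anticipate is purely one of bookkeeping rather than depth: in part (a), verifying that the two constructions are genuinely inverse requires carefully tracking the distinction between the right action $f \cdot \xi$ and the operator $\xi^*$ evaluated at $f$, and making sure the associativity/compatibility axioms of a right module correspond exactly to the anti-homomorphism and $R$-fixing conditions. In part (c), the only subtlety is confirming that $R^{p^e}$-linearity of $\xi$ is equivalent to $\xi$ commuting with the multiplication operators by elements of $R^{p^e}$ inside $D_{R|k}$ --- this is where the $F$-finiteness hypothesis ensures $D^{(e)}_R = \End_{R^{p^e}}(R)$ sits inside $D_{R|k}$ as expected --- after which the commutator computation is routine.
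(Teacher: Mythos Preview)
Your proposal is correct and follows essentially the same approach as the paper. For part (a) the paper builds the same two maps using Lemma~\ref{lemma-mult_is_diff_op} and Lemma~\ref{lemma-homo-is-iso} and declares them inverse; for part (b) it likewise invokes Lemma~\ref{lemma-mult_is_diff_op} via the correspondence of part (a); and for part (c) it applies $\Phi$ directly to the relation $\xi f^{p^e} = f^{p^e}\xi$, which is exactly your commutator computation written out.
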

\begin{proof}
	We begin with part (a), for which we explicitly describe the correspondence. Suppose $\Phi: D_{R|k} \to D_{R|k}$ is a ring anti-automorphism that fixes $R$. The ring $R$ is naturally a left $D_{R|k}$-module, and by restricting scalars across $\Phi$ we give it a right $D_{R|k}$-module structure. More explicitly, we define $f \cdot \xi := \Phi(\xi) (f)$ for all $\xi \in D_{R|k}$ and $f \in R$. Because $\Phi$ fixes $R$, this right $D_{R|k}$-module structure is compatible. 
	
	Conversely, suppose $R$ has a compatible right $D_{R|k}$-module structure. Given $\xi \in D_{R|k}$ we define $\xi^\ast$ to be the $k$-linear operator defined by $\xi^\ast (f) = f \cdot \xi$ for all $f \in R$ (i.e. $\xi^\ast$ is multiplication by $\xi$ under the given right $D_{R|k}$-module structure). By Lemma \ref{lemma-mult_is_diff_op}, $\xi^\ast$ is a differential operator and therefore the assignment $[\xi \mapsto \xi^\ast]$ gives a ring anti-homomorphism $\Phi: D_{R|k} \to D_{R|k}$ that fixes $R$. By Lemma \ref{lemma-homo-is-iso}, $\Phi$ is an anti-automorphism. 
	
	These two constructions are readily checked to be inverses, which proves part (a).
	
	We now tackle (b). Given a ring anti-automorphism $\Phi: D_{R|k} \to D_{R|k}$ that fixes $R$, we consider the compatible right $D_{R|k}$-module structure on $R$ given by restriction of scalars. The result then follows from Lemma \ref{lemma-mult_is_diff_op} (alternatively, one could use an induction argument entirely analogous to the one in Lemma \ref{lemma-mult_is_diff_op}).
	
	We now assume that $k$ is perfect of characteristic $p>0$ and that $R$ is $F$-finite, and we prove part (c). Suppose that $\Phi: D_{R|k} \to D_{R|k}^{op}$ is an anti-automorphism that fixes $R$. Recall that $\xi \in D_{R|k}$ has level $e>0$ if and only if $\xi$ is $R^{p^e}$-linear; i.e. if $\xi f^{p^e} = f^{p^e} \xi$ for all $f \in R$. Applying $\Phi$ to such an equality we obtain $f^{p^e} \Phi(\xi) = \Phi(\xi) f^{p^e}$, which proves the result.
\end{proof}
 
\begin{theorem} \label{thm-antiauto-on-Gor}
	Let $k$ be a field and $R$ be a Gorenstein $k$-algebra. Assume that one of the following holds:
	\begin{enumerate}[(1)]
		\item The field $k$ has characteristic zero and $R$ is a local normal domain that is essentially of finite type over $k$.
		
		\item The field $k$ is a perfect field of characteristic $p>0$, and $R$ is local, $F$-finite and admits a canonical module.
		
		\item The ring $R$ is complete local and $k$ is a coefficient field.
		
		\item The ring $R = \bigoplus_{n = 0}^\infty R_n$ is graded and finitely generated over $R_0 = k$. 
	\end{enumerate}
	Then there is a ring anti-automorphism on $D_{R|k}$ that fixes $R$. It respects the order filtration and, if $k$ a perfect field of positive characteristic, it also respects the level filtration. 
\end{theorem}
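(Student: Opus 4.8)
The plan is to reduce everything to the abstract correspondence of Lemma~\ref{lemma-abstract-stuff}(a), which says that an anti-automorphism of $D_{R|k}$ fixing $R$ is the same datum as a compatible right $D_{R|k}$-module structure on $R$ itself. Section~3 does not directly provide a right module structure on $R$, but rather on the canonical module $\omega_R$; the bridge is the Gorenstein hypothesis, under which $\omega_R$ is isomorphic to $R$ as an $R$-module (in the graded case up to a degree shift, i.e. $\omega_R \cong R(a)$ for the $a$-invariant $a$, which is still an $R$-module isomorphism after forgetting the grading). Transporting the right $D_{R|k}$-module structure across such an $R$-linear isomorphism yields a compatible right $D_{R|k}$-module structure on $R$, and compatibility is preserved precisely because the isomorphism is $R$-linear.

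In more detail, I would first note that in each of the four cases $R$ is Cohen-Macaulay, being Gorenstein, so that the hypotheses of the relevant existence theorem from Section~3 are met: case~(1) falls under Theorem~\ref{thm-omegaR-rightDmodule-1}, case~(2) under Theorem~\ref{thm-omegaR-rightDmodule-2}, case~(3) under Theorem~\ref{thm-omegaR-rightDmodule-3}, and case~(4) under Theorem~\ref{thm-omegaR-rightDmodule-4}. Each of these furnishes a compatible right $D_{R|k}$-module structure on $\omega_R$.

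Next I would fix an $R$-module isomorphism $\omega_R \cong R$ afforded by the Gorenstein property and use it to transport the right $D_{R|k}$-module structure to $R$. Lemma~\ref{lemma-abstract-stuff}(a) then converts this compatible right module structure into a ring anti-automorphism $\Phi$ of $D_{R|k}$ fixing $R$; equivalently, an isomorphism $D_{R|k} \cong D_{R|k}^{op}$. That $\Phi$ respects the order filtration is immediate from Lemma~\ref{lemma-abstract-stuff}(b). In the cases where $k$ is a perfect field of positive characteristic, $R$ is $F$-finite --- this is part of the hypothesis in case~(2), and in cases~(3) and~(4) it follows because a complete Noetherian local ring with perfect coefficient field (respectively, an algebra finitely generated over a perfect field) is $F$-finite --- so the level filtration is defined and Lemma~\ref{lemma-abstract-stuff}(c) shows $\Phi$ respects it.

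I do not expect a serious obstacle here: the substantive work has already been carried out in the construction of the right $D$-module structures on $\omega_R$ (Theorems~\ref{thm-omegaR-rightDmodule-1}--\ref{thm-omegaR-rightDmodule-4}) and in the abstract dictionary of Lemma~\ref{lemma-abstract-stuff}, and this proof is an assembly of those pieces via the Gorenstein identification $\omega_R \cong R$. The only points demanding care are verifying that each case genuinely satisfies the Cohen-Macaulay (and, where relevant, $F$-finiteness) hypotheses of the Section~3 theorems, and keeping track of the degree shift in the graded case, neither of which causes real difficulty.
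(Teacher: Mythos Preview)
Your proposal is correct and matches the paper's own proof essentially line for line: the paper also invokes Theorems~\ref{thm-omegaR-rightDmodule-1}--\ref{thm-omegaR-rightDmodule-4} to get a compatible right $D_{R|k}$-module structure on $\omega_R$, pulls it back across an $R$-module isomorphism $R \cong \omega_R$ given by the Gorenstein hypothesis, and then applies Lemma~\ref{lemma-abstract-stuff}(a),(b),(c). Your additional remarks on Cohen--Macaulayness, the degree shift in the graded case, and $F$-finiteness in cases~(3) and~(4) are correct and slightly more explicit than the paper, but the argument is the same.
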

\begin{proof}
By Theorems \ref{thm-omegaR-rightDmodule-1}, \ref{thm-omegaR-rightDmodule-2}, \ref{thm-omegaR-rightDmodule-3} and \ref{thm-omegaR-rightDmodule-4} we know that $\omega_R$ admits a compatible right $D_{R|k}$-module structure. By pulling it back across an $R$-module isomorphism $R \cong \omega_R$, we obtain a compatible right $D_{R|k}$-module structure on $R$ which, by Lemma \ref{lemma-abstract-stuff} (a), corresponds to a ring automorphism $D_{R|k} \cong D_{R|k}^{op}$ that fixes $R$ (explicitly, the ring automorphism is given by $[\xi \mapsto \xi^\ast]$ where $\xi^\ast(f) = f \cdot \xi$ for all $\xi \in D_{R|k}$ and $f \in R$). The rest of the statements follow from Lemma \ref{lemma-abstract-stuff} (b) and (c).
\end{proof}
As mentioned in Section \ref{scn-intro}, after this work was completed we realized that the cases (1), and (4) of Theorem \ref{thm-antiauto-on-Gor} were already proven by Yekutieli \cite[Cor. 4.9]{Yek98} in greater generality, albeit using more sophisticated methods.

\subsection{Involutivity of anti-automorphisms on $D_{R|k}$}

We now address the question of when a ring anti-automorphism $\Phi: D_{R|k} \xrightarrow{\sim} D_{R|k}$ on $D_{R|k}$ is involutive (i.e. its own inverse). For starters, we note that the associated graded morphism $\gr (\Phi) : \gr D_{R|k} \to \gr D_{R|k}$ is always involutive (see Proof of Lemma \ref{lemma-homo-is-iso}), and that the following simple observation allows us to reduce the problem significantly.

\begin{lemma} \label{lemma-suff-to-check-on-one}
	Let $\Phi: D_{R|k} \xrightarrow{\sim} D_{R|k}$ be a ring anti-automorphism that fixes $R$. If for all $\xi \in D_{R|k}$ we have $\Phi^2(\xi)(1) = \xi(1)$ then $\Phi$ is involutive. 
\end{lemma}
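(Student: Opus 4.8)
The plan is to show that the ring automorphism $\Psi := \Phi^2$ of $D_{R|k}$ is the identity. Since $\Phi$ is an anti-automorphism, the composite $\Psi = \Phi^2$ is a genuine ring \emph{homomorphism} (an automorphism), and since $\Phi$ fixes $R$, so does $\Psi$, i.e. $\Psi(f) = f$ for all $f \in R$ viewed as multiplication operators. The hypothesis gives the partial information $\Psi(\xi)(1) = \xi(1)$ for every $\xi \in D_{R|k}$, and the goal is to upgrade this to the full equality of operators $\Psi(\xi)(g) = \xi(g)$ for all $g \in R$, which is exactly $\Psi(\xi) = \xi$.

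The key observation I would use is that evaluation of a differential operator at an arbitrary element of $R$ can be recovered from evaluation at $1 \in R$ by absorbing a multiplication operator. Concretely, for $\xi \in D_{R|k}$ and $g \in R$, viewing $g$ as the multiplication operator in $D_{R|k}$, one has $\xi(g) = (\xi g)(1)$, where $\xi g$ denotes the product in $D_{R|k}$. This is the crux of the argument: it converts the question of whether $\Psi(\xi)$ and $\xi$ agree as operators into a family of evaluation-at-$1$ statements, to which the hypothesis directly applies.

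With this in hand the computation is short. Applying the hypothesis to the element $\xi g \in D_{R|k}$ gives $\Psi(\xi g)(1) = (\xi g)(1) = \xi(g)$. On the other hand, since $\Psi$ is multiplicative and fixes $R$, we have $\Psi(\xi g) = \Psi(\xi)\Psi(g) = \Psi(\xi) g$, whence $\Psi(\xi g)(1) = \big(\Psi(\xi) g\big)(1) = \Psi(\xi)(g)$. Comparing the two expressions yields $\Psi(\xi)(g) = \xi(g)$ for every $g \in R$, so $\Psi(\xi) = \xi$. As $\xi$ was arbitrary, $\Phi^2 = \Psi = \id$ and $\Phi$ is involutive.

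I do not anticipate a serious obstacle. The only points requiring care are bookkeeping observations: that $\Phi^2$ is a homomorphism rather than an anti-homomorphism (so that products are preserved in order), and that it fixes $R$ pointwise; both are immediate. The genuine content is the identity $\xi(g) = (\xi g)(1)$, which is what reduces the problem entirely to the given hypothesis.
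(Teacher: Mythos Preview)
Your proof is correct and follows essentially the same approach as the paper's own proof: both use the identity $\xi(g) = (\xi g)(1)$ together with the facts that $\Phi^2$ is multiplicative and fixes $R$ to upgrade the hypothesis $\Phi^2(\xi)(1) = \xi(1)$ to the full equality $\Phi^2(\xi)(g) = \xi(g)$ for all $g \in R$. The paper compresses this into a single chain of equalities, but the content is identical.
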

\begin{proof}
	For such a $\Phi$ we have $\Phi^2(\xi)(f) = (\Phi^2(\xi) f)(1) = \Phi^2(\xi f)(1) = (\xi f)(1) = \xi(f)$ for all $f \in R$ and $\xi \in D_{R|k}$, and thus $\Phi^2(\xi) = \xi$
\end{proof}

We can also show that $\Phi$ is involutive on differential operators of order $\leq 1$.

\begin{lemma} \label{lemma-der-inv-ok}
	Let $k$ be a commutative ring, $R$ be a commutative $k$-algebra and $\Phi$ be a ring anti-automorphism on $D_{R|k}$ that fixes $R$. If $\xi$ is a differential operator of order $\leq 1$ then $\Phi^2(\xi) = \xi$. Consequently, if $D_{R|k}$ is generated as an algebra by the differential operators of order $\leq 1$ then $\Phi$ must be involutive.
\end{lemma}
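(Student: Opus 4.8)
The plan is to handle the order-$\leq 1$ case directly and then bootstrap to the generation statement using that $\Phi^2$ is a ring endomorphism. The observation I would exploit is already contained in the proof of Lemma \ref{lemma-homo-is-iso}: if $\xi$ has order $\leq n$ then $\Phi(\xi) + (-1)^{n+1}\xi$ has order $\leq n-1$. Specializing to $n = 1$, this says precisely that
\[
r := \Phi(\xi) + \xi \in D^0_{R|k} = R \qquad \text{for every } \xi \in D^1_{R|k}.
\]

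Given this, the first assertion is immediate. Writing $\Phi(\xi) = r - \xi$ and using that $\Phi$ is additive and fixes $R$, I would apply $\Phi$ a second time to obtain
\[
\Phi^2(\xi) = \Phi(r - \xi) = \Phi(r) - \Phi(\xi) = r - (r - \xi) = \xi .
\]
If one prefers not to cite the interior of the proof of Lemma \ref{lemma-homo-is-iso}, the required input can be produced by hand: for a derivation $\xi$ one has $[\xi,f] = \xi(f) \in R$, and since $\Phi$ is an anti-homomorphism fixing $R$ one computes $[\Phi(\xi), f] = -\Phi([\xi,f]) = -[\xi,f]$, so that $\Phi(\xi)+\xi$ commutes with all of $R$ and hence lies in $\End_R(R) = R$. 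Together with the decomposition $D^1_{R|k} = R \oplus \Der_k(R)$ (Remark \ref{rmk-derivations}) and $\Phi^2|_R = \id$, this covers all of $D^1_{R|k}$.

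For the consequence, I would observe that $\Phi^2$ is a ring \emph{homomorphism} (being the composite of two anti-homomorphisms) that fixes the subring $R$, and hence fixes $k$; by the first part it also fixes every element of $D^1_{R|k}$. A ring homomorphism is determined by its values on an algebra generating set, so if $D^1_{R|k}$ generates $D_{R|k}$ as an algebra, then $\Phi^2$ agrees with the identity on all of $D_{R|k}$; that is, $\Phi^2 = \id$ and $\Phi$ is involutive.

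I do not expect a genuine obstacle here: the content is carried entirely by the order-drop estimate from Lemma \ref{lemma-homo-is-iso}, and the remainder is formal. The only points requiring a little care are that $\Phi^2$ is a homomorphism rather than an anti-homomorphism, and that the phrase ``generated as an algebra'' is exactly what guarantees an endomorphism fixing the generators must be the identity.
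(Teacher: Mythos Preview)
Your proposal is correct and follows essentially the same approach as the paper. Both arguments reduce to showing that $\Phi(\xi)+\xi\in R$ for $\xi\in D^1_{R|k}$ and then apply $\Phi$ once more; the paper carries out the commutator computation directly (identifying the element of $R$ as $\Phi(\xi)(1)$), whereas you observe that this is exactly the $n=1$ case of the order-drop claim inside Lemma \ref{lemma-homo-is-iso}, which is a clean shortcut.
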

\begin{proof}
	We recall that $D^1_{R|k} = R \oplus \Der_k(R)$ and, therefore, it suffices to prove the claim in the case where $\xi$ is a derivation. With this assumption, we observe that for all $f \in R$ we have 
	\begin{align*}
	\Phi(\xi)(f) & = (\Phi(\xi) f)(1) \\	
		& = ([\Phi(\xi), f] + f \Phi(\xi)(1) \\
		& = (- \Phi([\xi, f]) + f \Phi(\xi))(1) \\
		& = - \xi(f) + f \Phi(\xi)(1)
	\end{align*}
	and therefore $\Phi(\xi) = - \xi + \Phi(\xi)(1)$. We conclude that $\Phi^2(\xi) = \Phi(-\xi + \Phi(\xi)(1)) = - \Phi(\xi) + \Phi(\xi)(1) = \xi$ as required.
\end{proof}
It is known that whenever $k$ is a field of characteristic zero and $R$ is regular and essentially of finite type over $k$ the ring $D_{R|k}$ is generated by the differential operators of order $\leq 1$ \cite[\S15.5.6]{McCRob}, and therefore any ring anti-automorphism on $D_{R|k}$ that fixes $R$ is involutive. In characteristic $p>0$, we use the Skolem-Noether theorem to prove an analogous result for field extensions.
\begin{lemma} \label{lemma-K-involutive-charp}
	Let $k$ be a perfect field of characteristic $p>0$ and $K$ be a finitely generated field extension of $k$. Then every ring anti-automorphism $\Phi$ on $D_{K|k}$ that fixes $K$ is involutive.
\end{lemma}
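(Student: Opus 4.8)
The plan is to reduce the statement to a question about anti-automorphisms of the split central simple algebras $D^{(e)}_K$ and then apply the Skolem--Noether theorem. First I would record that $K$ satisfies Setup \ref{setup-smooth}: since $k$ is perfect, the finitely generated field extension $K/k$ is separably generated, hence formally smooth and essentially of finite presentation, and $\Omega^1_{K|k}$ is a finite-dimensional $K$-vector space and so free. Proposition \ref{prop-exists-involn-smoothcase} therefore supplies a distinguished \emph{involutive} anti-automorphism $\Phi_0$ of $D_{K|k}$ that fixes $K$. Both $\Phi$ and $\Phi_0$ fix $K$, so by Lemma \ref{lemma-abstract-stuff}(c) they respect the level filtration and restrict to anti-automorphisms of each $A_e := D^{(e)}_K = \End_{K^{p^e}}(K)$.

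Next I would set up the central simple algebra picture. Because $K$ is $F$-finite, it is a finite (purely inseparable) extension of $K^{p^e}$, so $A_e = \End_{K^{p^e}}(K)$ is a split central simple algebra over its center $K^{p^e}$, inside which $K$ embeds as a maximal subfield via the multiplication operators $L_a\colon x \mapsto ax$. The elementary fact I would single out is that this subfield is self-centralizing: an endomorphism of $K$ commuting with every $L_a$ ($a \in K$) is $K$-linear, hence equals some $L_b$, so $C_{A_e}(K) = K$.

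Now fix $e$ and set $\Psi := \Phi \circ \Phi_0$, an automorphism of $A_e$ (a composite of two anti-automorphisms) which fixes $K$ pointwise. By the Skolem--Noether theorem $\Psi$ is inner, say $\Psi = \gamma_c$ where $\gamma_c(\xi) = c\xi c^{-1}$ for some unit $c \in A_e^\times$; since $\Psi$ fixes $K$, the element $c$ centralizes $K$, so $c \in C_{A_e}(K) = K^\times$ by the previous step. Using that $\Phi_0$ is involutive gives $\Phi = \gamma_c \circ \Phi_0$ on $A_e$, and since $\Phi_0$ is an anti-automorphism fixing $c \in K$ one computes $\Phi_0 \circ \gamma_c = \gamma_c^{-1} \circ \Phi_0$; hence $\Phi^2 = \gamma_c \circ (\Phi_0 \circ \gamma_c) \circ \Phi_0 = \gamma_c \circ \gamma_c^{-1} \circ \Phi_0^2 = \mathrm{id}$ on $A_e$. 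As this holds for every $e$ and $D_{K|k} = \bigcup_e D^{(e)}_K$, the anti-automorphism $\Phi$ is involutive.

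The conceptual heart, and the step I expect to demand the most care, is recognizing $D^{(e)}_K$ as a split central simple $K^{p^e}$-algebra in which $K$ is a self-centralizing maximal subfield, so that Skolem--Noether applies and pins the ambiguity $c$ down to $K^\times$; once this is arranged, involutivity falls out of the short computation with $\Phi_0$. I would emphasize that this argument is uniform in $p$, and in particular handles $p = 2$ cleanly, whereas more naive attempts to control $c$ directly (for instance through a relation forcing $c^2 \in K^{p^e}$) fall one Frobenius power short precisely when $p = 2$.
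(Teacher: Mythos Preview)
Your proposal is correct and follows essentially the same approach as the paper: invoke Proposition \ref{prop-exists-involn-smoothcase} for a reference involutive anti-automorphism, use Lemma \ref{lemma-abstract-stuff}(c) to reduce to each level $D^{(e)}_K$, apply Skolem--Noether to the composite automorphism, and then use $C_{D^{(e)}_K}(K) = K$ to force the conjugating unit into $K$ and conclude $\Phi^2 = \id$. Your write-up is slightly more explicit about why $K$ satisfies Setup \ref{setup-smooth} and why the centralizer of $K$ in $D^{(e)}_K$ is $K$, but the logical skeleton is identical to the paper's proof.
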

\begin{proof}
	By Proposition \ref{prop-exists-involn-smoothcase} we know there is an involutive ring anti-automorphism $\Psi$ on $D_{K|k}$ that fixes $K$. By Lemma \ref{lemma-abstract-stuff}(c) both $\Phi$ and $\Psi$ respect the level filtration (see Subsection \ref{subscn-rodo}), and it suffices to show that for every $e>0$ the restriction of $\Phi$ to the subring of differential operators of level $e$ is involutive. We thus fix an arbitrary $e > 0$.
	
	We observe that $D^{(e)}_K = \End_{K^{p^e}}(K)$ is a matrix ring over the field $K^{p^e}$. The Skolem-Noether theorem tells us that every automorphism of a matrix algebra over a field is inner and, in particular, so is the composition
	$$\begin{tikzcd}
	D^{(e)}_K \arrow[r, "\Psi"] & D^{(e)}_K \arrow[r, "\Phi"] & D^{(e)}_K.
	\end{tikzcd}$$
	We conclude that there exists some invertible $\gamma \in D^{(e)}_K$ such that $\Phi \Psi (\xi) = \gamma \xi \gamma^{-1}$ for all $\xi \in D^{(e)}_K$ and, replacing $\xi$ by $\Psi(\xi)$, we conclude that $\Phi(\xi) = \gamma \Psi(\xi) \gamma^{-1}$ for all $\xi \in D^{(e)}_K$. 
	
	We note that, since $\Phi$ and $\Psi$ fix $K$, we have that $f \gamma = \gamma f$ for all $f \in K$; that is, $\gamma \in D^{(0)}_K = K$. In particular, $\gamma$ and $\gamma^{-1}$ are also fixed by both $\Phi$ and $\Psi$. We conclude that for all $\xi \in D^{(e)}_K$ we have
	\begin{align*}
	\Phi^2(\xi) & = \Phi (\gamma \Psi(\xi) \gamma^{-1}) \\
		& = \gamma^{-1} \Phi \Psi(\xi) \gamma \\
		& = \gamma^{-1} (\gamma \Psi^2 (\xi) \gamma^{-1}) \gamma \\
		& = \xi 
	\end{align*}
	as required.
\end{proof}

We are now ready to prove our main result in this subsection.

\begin{theorem} \label{thm-when-invol}
	Let $k$ be a field, $R$ be a commutative $k$-algebra and $\Phi$ be a ring anti-automorphism on $D_{R|k}$ that fixes $R$. Then $\Phi$ is involutive in the following cases:
	\begin{enumerate}[(1)]
		\item The algebra $R$ is reduced and essentially of finite type over $k$.
		
		\item The ring $R$ is local, Gorenstein and zero dimensional, $k$ is a coefficient field of $R$ and $\Phi$ is the ring anti-automorphism that corresponds to the compatible right $D_{R|k}$-module structure on $R$ given by pullback via an isomorphism $R \cong \Hom_k(R,k)$ (see Lemma \ref{lemma-abstract-stuff}(a)).
	\end{enumerate}
\end{theorem}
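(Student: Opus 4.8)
The plan is to reduce both cases to the statement $\Phi^2 = \id$ and then exploit a different structural feature in each. A preliminary observation valid in general will be useful: since $\Phi$ is an anti-homomorphism fixing $R$, for $f \in R$ one has $\Phi([\xi,f]) = -[\Phi(\xi),f]$, and hence $\Phi^2([\xi,f]) = [\Phi^2(\xi),f]$. Combined with the fact (from the proof of Lemma \ref{lemma-homo-is-iso}) that $\gr\Phi$ acts as $-1$ in odd degree and $+1$ in even degree, so that $\gr\Phi^2 = \id$, this gives good control of $\Phi^2$, and Lemma \ref{lemma-der-inv-ok} already shows $\Phi^2$ fixes every operator of order $\leq 1$. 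By Lemma \ref{lemma-suff-to-check-on-one} it then suffices to prove $\Phi^2(\xi)(1) = \xi(1)$ in $R$ for every $\xi \in D_{R|k}$.

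For case (1), the idea is to localize at the generic points. Since $R$ is reduced and essentially of finite type, hence Noetherian, the natural map $R \hookrightarrow \prod_i K_i$ into the product of the residue fields $K_i = R_{\fp_i}$ at its minimal primes $\fp_i$ is injective, so it is enough to verify $\Phi^2(\xi)(1) = \xi(1)$ after passing to each $K_i$. Because $P^n_{R|k}$ is finitely presented, localization commutes with $\Hom_R(P^n_{R|k}, R)$, yielding $(D_{R|k})_{\fp_i} \cong D_{K_i|k}$; since $\Phi$ fixes $R$ it sends the Ore set $R\setminus\fp_i$ to units, so it should descend to an anti-automorphism $\Phi_i$ of $D_{K_i|k}$ fixing $K_i$ and compatible with $\Phi$. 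Now each $K_i$ is a finitely generated field extension of $k$, hence regular and essentially of finite type: in characteristic zero $D_{K_i|k}$ is generated by operators of order $\leq 1$, so $\Phi_i$ is involutive by Lemma \ref{lemma-der-inv-ok}, while in characteristic $p$ (with $k$ perfect) $\Phi_i$ is involutive by Lemma \ref{lemma-K-involutive-charp}. Feeding this back yields $\Phi^2(\xi)(1) = \xi(1)$ in each $K_i$, and hence in $R$.

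For case (2), I would argue directly and more cheaply. Here $R$ is a finite-dimensional local $k$-algebra with residue field $k$, so $\fm$ is nilpotent, every $k$-linear endomorphism has finite order, and therefore $D_{R|k} = \End_k(R)$. Fix the $R$-module isomorphism $\beta\colon R \xrightarrow{\sim} \Hom_k(R,k)$ defining $\Phi$ and set $\la f,g\ra := \beta(f)(g)$, a nondegenerate $k$-bilinear form. Unwinding the right module structure of Lemma \ref{lemma-abstract-stuff}(a), one finds that $\Phi(\xi)$ is exactly the adjoint of $\xi$ for this form, i.e. $\la \Phi(\xi)f,g\ra = \la f,\xi g\ra$. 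The $R$-linearity of $\beta$ gives $\la rf,g\ra = \la f,rg\ra$, whence $\la f,g\ra = \la 1, fg\ra = \la 1, gf\ra = \la g,f\ra$; that is, the form is symmetric precisely because $R$ is commutative. A symmetric nondegenerate form makes the adjoint an involution, so $\Phi^2 = \id$ follows immediately.

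I expect the main obstacle to be the localization step in case (1): one must check carefully that differential operators localize along $R\setminus\fp_i$ (resting on finite presentation of the modules of principal parts) and, above all, that $\Phi$ descends compatibly to each $D_{K_i|k}$, i.e. that the Ore localization respects the anti-automorphism. I also anticipate that the positive-characteristic subcase genuinely needs $k$ perfect, since that hypothesis is built into Lemma \ref{lemma-K-involutive-charp}: for non-perfect $k$ the generic fibers $K_i$ need not be separable over $k$, and the Skolem--Noether argument underlying that lemma is then unavailable. By contrast, case (2) should be entirely routine once $D_{R|k} = \End_k(R)$ is identified, the only real content being the symmetry of the Frobenius pairing $\la f,g\ra = \la 1,fg\ra$.
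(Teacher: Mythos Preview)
Your approach to case (1) is essentially the paper's: reduce to the residue fields at the generic points and invoke Lemma \ref{lemma-der-inv-ok} in characteristic zero and Lemma \ref{lemma-K-involutive-charp} in characteristic $p$. The only difference is in how the anti-automorphism is transported to the localization. You propose Ore-localizing $D_{R|k}$ directly, which is delicate; the paper instead passes through Lemma \ref{lemma-abstract-stuff}(a): translate $\Phi$ into a compatible right $D_{R|k}$-module structure on $R$, localize \emph{that} (which is standard, via \cite[\S16.4.14]{EGAIV}), and then translate back to an anti-automorphism on $D_{W^{-1}R|k}$. This sidesteps your acknowledged obstacle entirely. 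Your observation that the positive-characteristic step leans on $k$ being perfect is accurate and applies equally to the paper's argument.

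For case (2) your argument is correct and slightly more conceptual than the paper's. The paper does not invoke $D_{R|k} = \End_k(R)$ (true, but unnecessary); it simply sets $\sigma := \varphi(1)$, derives $\sigma(\Phi(\xi)(f)\,g) = \sigma(f\,\xi(g))$, specializes to get $\Phi^2(\xi)(1) = \xi(1)$, and finishes with Lemma \ref{lemma-suff-to-check-on-one}. Your version recognizes this computation as the statement that $\Phi$ is the adjoint for the pairing $\langle f,g\rangle = \sigma(fg)$, which is symmetric because $R$ is commutative, whence the adjoint is involutive. The two arguments are the same calculation in different clothing; yours makes the reason for involutivity (symmetry of the Frobenius form) more transparent, while the paper's stays closer to the running lemmas.
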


\begin{proof}
	We begin with case (1). First observe that the anti-automorphism $\Phi$ on $D_{R|k}$ gives $R$ a compatible right $D_{R|k}$-module structure, which induces a compatible right $D_{W^{-1}R |k}$-module structure on every localization $W^{-1} R$ of $R$ \cite[\S16.4.14]{EGAIV}, which in turn corresponds to an anti-automorphism on $D_{W^{-1}R|k}$ that fixes $W^{-1} R$ (see Lemma \ref{lemma-abstract-stuff}(a)). In this way, the isomorphism $\Phi$ induces an isomorphism of sheaves of rings on $\Spec R$. It then suffices to show that $\Phi$ is involutive locally and we therefore restrict to the case where $R$ is a domain. Moreover, by applying the same construction to the field of fractions $K$ of $R$ we obtain an anti-automorphism on $D_{K|k}$ in such a way that the diagram
	$$\begin{tikzcd}
	D_{K|k} \arrow[r, "\sim"] & D_{K|k} \\
	D_{R|k} \arrow[u, hook] \arrow[r, "\sim"] & D_{R|k} \arrow[u, hook]
	\end{tikzcd}$$
	commutes, and therefore it suffices to show that the anti-automorphism on $D_{K|k}$ is involutive. When $\Char k = p>0$ this was already proved in Lemma \ref{lemma-K-involutive-charp}. When $\Char k = 0$ the ring $D_{K|k}$ is generated by the elements of order $\leq 1$, since $K$ is regular and essentially of finite type over $k$, and the statement then follows from Lemma \ref{lemma-der-inv-ok}. 
	
	Let us now tackle case (2). Let $\varphi: R \xrightarrow{\sim} \Hom_k(R,k)$ denote the chosen isomorphism, and let $\sigma := \varphi(1)$. 
	
	Fix $\xi \in D_{R|k}$. By the definition of $\Phi(\xi)$, we know that for all $f \in R$ we have $\varphi(\Phi(\xi)(f)) = \varphi(f) \xi$ which, by the linearity of $\varphi$, gives
	$$\sigma(\Phi(\xi)(f) g) = \sigma(f \xi(g)) \text{ for all }f, g \in R.$$
	We conclude that, for all $h \in R$,
	$$\sigma(\Phi^2(\xi)(1) h) = \sigma(\Phi(\xi)(h)) = \sigma(h \xi(1));$$
	that is, $\varphi(\Phi^2(\xi)(1)) = \varphi(\xi(1))$. We conclude that $\Phi^2(\xi)(1) = \xi(1)$ for all $\xi \in D_{R|k}$ which, by Lemma \ref{lemma-suff-to-check-on-one}, gives the result. 
\end{proof}

\subsection{A curious result in positive characteristic} \label{subscn-curious-result}

We finish by showing how the generalization of the Skolem-Noether theorem due to Rosenberg and Zelinksi can be used to prove a curious result on uniqueness of anti-automorphisms in characteristic $p$. 

We first state the theorem of Rosenberg and Zelinski. Although their result is much more general, we state it here in a much weaker form that suffices for our purposes (a proof of this weaker statement can also be found in the survey of Isaacs \cite{Isaacs80}). 
\begin{theorem}[{\cite[Thm. 14]{RZ61}}] \label{thm-RZ}
	Suppose that $R$ is a unique factorization domain. Then every $R$-algebra automorphism $\Phi$ of the ring $M_n(R)$ of $n \times n$ matrices over $R$ is inner.
\end{theorem}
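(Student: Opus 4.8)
The plan is to prove the result by transporting the standard matrix units through $\Phi$ and reassembling them into a single conjugating matrix, with the unique-factorization hypothesis entering at exactly one point. Write $e_{ij} \in M_n(R)$ for the standard matrix units and set $f_{ij} := \Phi(e_{ij})$. Since $\Phi$ is an $R$-algebra isomorphism, the $f_{ij}$ form a complete system of matrix units: they satisfy $f_{ij}f_{kl} = \delta_{jk}f_{il}$ and $\sum_{i} f_{ii} = \Phi(I) = I$. Because the $e_{ij}$ generate $M_n(R)$ as an $R$-algebra and $\Phi$ is $R$-linear and multiplicative, it suffices to produce an invertible $U \in \GL_n(R)$ with $f_{ij} = U e_{ij} U^{-1}$ for all $i,j$; then $\Phi(a) = UaU^{-1}$ for all $a \in M_n(R)$, which is the assertion.

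To build $U$, I would first analyze the idempotent $f_{11}$ and its image $P := f_{11}R^n \subseteq R^n$, where $R^n$ is the standard column module on which $M_n(R)$ acts by left multiplication. As the image of an idempotent endomorphism, $P$ is a direct summand of $R^n$, hence a finitely generated projective $R$-module. The relation $\sum_i f_{ii} = I$ yields a direct sum decomposition $R^n = \bigoplus_{i=1}^n f_{ii}R^n$, and $f_{i1}, f_{1i}$ restrict to mutually inverse $R$-module isomorphisms $f_{11}R^n \cong f_{ii}R^n$ (since $f_{1i}f_{i1} = f_{11}$ and $f_{i1}f_{1i} = f_{ii}$). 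Thus $R^n \cong P^{\oplus n}$ as $R$-modules; comparing dimensions after passing to $\Frac(R)$ forces $\rank P = 1$.

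The main obstacle, and the only place the hypothesis is used, is showing that $P$ is in fact free. Here I would invoke the fact that over a unique factorization domain every finitely generated projective module of rank $1$ is free, equivalently that $\operatorname{Pic}(R) = 0$: such a $P$ is isomorphic to an invertible fractional ideal, and in a UFD (being a GCD domain) every invertible ideal is principal, as one sees by dividing a finite generating set by its greatest common divisor. This argument needs no noetherian hypothesis, which matters since UFDs need not be noetherian. Granting $P \cong R$, I would fix a generator $u_1$ of $P$ and set $u_i := f_{i1}u_1$ for $i = 1, \dots, n$. The matrix-unit relations give $f_{ij}u_k = f_{ij}f_{k1}u_1 = \delta_{jk}f_{i1}u_1 = \delta_{jk}u_i$; in particular each $f_{ii}R^n = Ru_i$ is free of rank $1$, so the decomposition $R^n = \bigoplus_i f_{ii}R^n$ shows $\{u_1, \dots, u_n\}$ is an $R$-basis of $R^n$. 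Letting $U = [\,u_1\mid\cdots\mid u_n\,] \in \GL_n(R)$ be the matrix with these columns, so that $u_k = U e_k$, one computes $U e_{ij} U^{-1} u_k = U e_{ij} e_k = \delta_{jk} u_i = f_{ij} u_k$ for every $k$; since this holds on a basis we get $f_{ij} = U e_{ij} U^{-1}$, completing the argument.

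I expect the hardest conceptual point to be precisely the freeness of $P$; everything else is formal bookkeeping with matrix units. An alternative opening would pass to $K = \Frac(R)$, apply the classical Skolem--Noether theorem to the central simple $K$-algebra $M_n(K)$ to obtain a conjugator in $\GL_n(K)$ determined up to $K^\times$, and then use unique factorization to clear denominators and verify the determinant is a unit; but the matrix-unit construction above produces $U \in \GL_n(R)$ directly and keeps the role of the UFD hypothesis isolated and transparent.
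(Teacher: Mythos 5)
Your proof is correct, and there is nothing to fix: the matrix-unit relations, the decomposition $R^n = \bigoplus_i f_{ii}R^n \cong P^{\oplus n}$ with $P = \Phi(e_{11})R^n$ of rank one, the triviality of the Picard group of a UFD via the GCD-domain argument (which, as you note, needs no noetherian hypothesis), and the assembly of the conjugating matrix $U \in \GL_n(R)$ are all sound. Note that the paper itself gives no proof of this statement---it is imported from Rosenberg and Zelinski \cite{RZ61}, with a proof of this weaker form attributed to Isaacs's survey \cite{Isaacs80}---and your argument is in substance the standard one found in those references, so your attempt matches the intended proof rather than diverging from it.
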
 
\begin{theorem} \label{thm-curious-charp}
	Let $k$ be a perfect field of characteristic $p>0$ and $R:=k[x_1, \ds, x_n]$ be a polynomial ring over $k$. Then there is a unique anti-automorphism on $D_{R|k}$ that fixes $R$. 
\end{theorem}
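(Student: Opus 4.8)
The plan is to prove uniqueness, since existence is already supplied by Proposition \ref{prop-exists-involn-smoothcase}: a polynomial ring over $k$ fits Setup \ref{setup-smooth}, so there is an involutive anti-automorphism $\Psi$ on $D_{R|k}$ fixing $R$. It therefore suffices to show that \emph{every} anti-automorphism $\Phi$ on $D_{R|k}$ fixing $R$ must coincide with $\Psi$. My strategy, in the spirit of Lemma \ref{lemma-K-involutive-charp}, is to analyze the composite $\theta := \Phi \circ \Psi$, which is a genuine ring \emph{automorphism} of $D_{R|k}$ (a composite of two anti-homomorphisms is a homomorphism), and to show that it is the identity; then $\Phi \circ \Psi = \id$, and the involutivity of $\Psi$ gives $\Phi = \Psi^{-1} = \Psi$.

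First I would pass to the level filtration. By Lemma \ref{lemma-abstract-stuff}(c) both $\Phi$ and $\Psi$ respect the level filtration, and hence so does $\theta$, which therefore restricts to a ring automorphism of each $D^{(e)}_R = \End_{R^{p^e}}(R)$ fixing $R$ pointwise. Thus it is enough to prove that $\theta$ is the identity on $D^{(e)}_R$ for every $e$. The key structural input is that, because $k$ is perfect and $R$ is a polynomial ring, $R$ is free of finite rank $p^{ne}$ over $R^{p^e}$ (with basis the monomials $x^\alpha$, $0 \leq \alpha_i < p^e$), so that $D^{(e)}_R \cong M_{p^{ne}}(R^{p^e})$ is a matrix ring; moreover $R^{p^e} \cong R$ via Frobenius is again a polynomial ring, hence a unique factorization domain. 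Since $\theta$ fixes $R$ it fixes $R^{p^e}$, so it is an $R^{p^e}$-algebra automorphism of this matrix ring, and Theorem \ref{thm-RZ} applies: $\theta$ is inner, so there is an invertible $\gamma \in D^{(e)}_R$ with $\theta(\xi) = \gamma \xi \gamma^{-1}$ for all $\xi \in D^{(e)}_R$.

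Finally I would force $\gamma$ to be a scalar. As $\theta$ fixes every $f \in R$, the identity $\gamma f \gamma^{-1} = f$ shows that $\gamma$ commutes with all of $R$, so $\gamma \in \End_R(R) = R$; applying the same observation to $\gamma^{-1}$ shows $\gamma$ is a unit of $R = k[x_1, \ds, x_n]$, whence $\gamma \in k^\times$. Since $k$ is central in the $k$-algebra $D_{R|k}$, conjugation by $\gamma$ is trivial and $\theta = \id$ on $D^{(e)}_R$. Letting $e$ range over all positive integers yields $\theta = \id$ on $D_{R|k}$, and the conclusion $\Phi = \Psi$ follows.

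The step I expect to be the main obstacle is the reduction to the matrix-ring-over-a-UFD setting where Theorem \ref{thm-RZ} is applicable: one must use both the perfectness of $k$ and the polynomial structure of $R$ to obtain simultaneously the freeness of $R$ over $R^{p^e}$ (giving the matrix description) and the UFD property of $R^{p^e}$. Once that packaging is in place, the deduction that the conjugating unit $\gamma$ lands in $k^\times$ is the clean finishing move that collapses the Rosenberg--Zelinski inner automorphism to the identity.
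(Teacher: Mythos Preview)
Your proposal is correct and follows essentially the same route as the paper's proof: reduce to each level $D^{(e)}_R$, recognize it as a matrix ring over the UFD $R^{p^e}$, apply the Rosenberg--Zelinski theorem to the automorphism obtained by composing two anti-automorphisms, and then use that the resulting conjugator lies in $R^\times = k^\times$ to kill the inner twist. The only cosmetic difference is that the paper compares two arbitrary anti-automorphisms $\Phi,\Psi$ directly (using the composite $\Psi^{-1}\Phi$), whereas you fix $\Psi$ to be the explicit involution from Proposition~\ref{prop-exists-involn-smoothcase} and then invoke $\Psi^{-1}=\Psi$ at the end; both packagings are equivalent.
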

\begin{proof}[Proof of Theorem \ref{thm-curious-charp}]
	The existence and an explicit description of such an anti-automorphism is given in Proposition \ref{prop-exists-involn-smoothcase}. Let $\Phi$ and $\Psi$ be two anti-automorphisms on $D_{R|k}$ that fix $R$, and we will show that $\Phi = \Psi$. 
	
	By Lemma \ref{lemma-abstract-stuff}(c), $\Phi$ and $\Psi$ respect the level filtration, and it is enough to show that for every $e>0$ the isomorphisms $\Phi$ and $\Psi$ agree on $D^{(e)}_R$. We thus fix an $e>0$.
	
	The ring $R$ is free as a module over its subring $R^{p^e}$ of $p^e$-th powers, and therefore $D^{(e)}_R = \End_{R^{p^e}}(R)$ is a matrix ring over $R^{p^e}$. Since $R^{p^e}$ is a polynomial ring, and thus a unique factorization domain, Theorem \ref{thm-RZ} applies and, arguing as in the proof of Lemma \ref{lemma-K-involutive-charp}, we conclude that there exists some invertible $\gamma \in R$ such that $\Phi(\xi) = \gamma^{-1} \Psi(\xi) \gamma$ for all $\xi \in D^{(e)}_R$. We now observe that all units of $R$ are in $k$, so $\gamma$ is in $k$ and it therefore commutes with all differential operators. We conclude that $\Phi(\xi) = \Psi(\xi)$ for all $\xi \in D^{(e)}_R$, which proves the result. 
\end{proof}

This result is in stark contrast with the situation in characteristic zero. Indeed, when $\Char k = 0$ and $R = k[x_1, \ds, x_n]$, a presentation for $D_{R|k}$ was given in \ref{subsubscn-rodo-smooth}. If we pick arbitrary elements $f_1, \ds, f_n \in R$ such that for every $i$ the polynomial $f_i$ involves only on the variable $x_i$, then the assignments $[x_i \mapsto x_i], [\partial_i \mapsto - \partial_i + f_i]$ define an involutive anti-automorphism on $D_{R|k}$ that fixes $R$.

\section{Quotient singularities} \label{scn-quot-sing}

In this section we show that if $R$ is the ring of invariants of a linear action of a finite group $G$ with no pseudoreflections on a polynomial ring $S := k[x_1, \ds, x_n]$, where $k$ is a field of characteristic zero, then there exists an involutive anti-automorphism on $D_{R|k}$ that fixes $R$. We begin by recalling the notion of pseudoreflection.

\begin{definition}
	A nontrivial linear transformation $\gamma \in \GL_n(k)$ is a pseudoreflection if there exists a hyperplane $H \sq k^n$ on which $\gamma$ acts trivially.  
\end{definition}

Let $k$ be a field of characteristic zero and $G$ be a finite subgroup of $\GL_n(k)$ that contains no pseudoreflections. The natural action of $G$ on $k^n$ extends to an action by algebra automorphisms on $\Sym_k(k^n)$, which via the natural choice of basis we identify with a polynomial ring $S := k[x_1, \ds, x_n]$. Let $R := S^G$ be the $G$-invariant subring of $S$. 

The group $G$ also acts on the ring $D_{S|k}$ of differential operators on $S$ by the formula
$$(\gamma \cdot \xi)(f) = \gamma \xi (\gamma^{-1} f) \ \ \ (\gamma \in G, \xi \in D_{S|k}, f \in S).$$
It is easy to check that every $G$-invariant differential operator induces a differential operator on $R$, giving a map $D_{S|k}^G \to D_{R|k}$. We have the following theorem of Kantor.

\begin{theorem}[{\cite[III.III]{Kan77}, \cite{Lev81}}] \label{thm-Kantor}
	Let $k, G, S$ and $R$ be as above. Then the natural map $D_{S|k}^G \to D_{R|k}$ is an isomorphism. 
\end{theorem}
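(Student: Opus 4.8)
The plan is to prove that the natural map $D_{S|k}^G \raw D_{R|k}$ is an isomorphism by comparing two reflexive $R$-modules that visibly agree on the open locus where $G$ acts freely, whose complement has codimension at least two. First I would record that the map sends a $G$-invariant operator to its restriction and respects the order filtration, so that it carries $(D^n_{S|k})^G$ into $D^n_{R|k}$; it therefore suffices to prove that $(D^n_{S|k})^G \raw D^n_{R|k}$ is an isomorphism for each $n$. I would also note that $R = S^G$ is a normal domain.

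Next I would exploit the no-pseudoreflections hypothesis purely to control codimension. For every nontrivial $\gamma \in G$ the fixed locus in $\A^n$ is $\ker(\gamma - 1)$, which has dimension $\leq n-2$ precisely because $\gamma$ is not a pseudoreflection. Hence the non-free locus in $\Spec S$, being the union of these fixed subspaces, has codimension $\geq 2$, and since $\pi : \Spec S \raw \Spec R$ is finite its image $Z \sq \Spec R$ again has codimension $\geq 2$. Setting $U := \Spec R \setminus Z$ and $\t U := \pi^{-1}(U)$, the map $\pi : \t U \raw U$ is a finite étale $G$-torsor and $R$ is regular on $U$.

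The geometric heart is to establish the isomorphism over $U$ by étale descent for differential operators. Since $\pi$ is étale one has $\Omega^1_{\t U | U} = 0$, so principal parts pull back and $D_{\t U | k} = \pi^* D_{U|k}$; since $\pi$ is a $G$-torsor, taking $G$-invariants of the pushforward recovers the base, giving $(\pi_* \t{D_{S|k}})^G|_U = D_{R|k}|_U$, and this identification is exactly the natural map. To finish I would upgrade this to a global statement by reflexivity: $D^n_{R|k} \cong \Hom_R(P^n_{R|k}, R)$ is the $R$-dual of a finite module over the normal domain $R$, hence reflexive (as in the proof of Theorem \ref{thm-omegaR-rightDmodule-1}); and $(D^n_{S|k})^G$ is a direct summand, via the Reynolds operator $\frac{1}{|G|}\sum_{\gamma \in G} \gamma$, of the $R$-module $D^n_{S|k}$, which is a finite free $S$-module and so reflexive over $R$ (as $S$ is a finite extension of normal domains with $S$ normal, hence reflexive over $R$), so the summand is reflexive too. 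A morphism of reflexive sheaves on the normal scheme $\Spec R$ that restricts to an isomorphism away from the codimension-$\geq 2$ set $Z$ is itself an isomorphism, because $\Gamma(\Spec R, \cF) = \Gamma(U, \cF)$ for reflexive $\cF$; this yields the theorem.

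The main obstacle I expect is the descent step in the third paragraph: one must match the abstract $G$-action $(\gamma \cdot \xi)(f) = \gamma\, \xi(\gamma^{-1} f)$ on $D_{S|k}$ with the geometric torsor descent, and verify that the two reflexive modules are compared by the natural restriction map itself rather than merely being abstractly isomorphic. Once the étale-local identity $(\pi_* D_{\t U|k})^G = D_{U|k}$ is pinned down compatibly with the natural map, the no-pseudoreflections hypothesis enters only through the codimension bound, and the remaining reflexivity argument is routine given the techniques of Section \ref{scn-rightD-on-can}.
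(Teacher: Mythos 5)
The paper itself does not prove Theorem \ref{thm-Kantor}: the result is imported by citation from Kantor \cite{Kan77} and Levasseur \cite{Lev81}, so there is no in-paper argument to compare against. On its own merits your proposal is correct, and it is the standard modern proof of this statement (the same strategy used for the characteristic-$p$ analogue in Jeffries' notes \cite{Jeffries-notes}, which the paper cites in the remark following Theorem \ref{thm-quot-invol}): the absence of pseudoreflections bounds the non-free locus $W \subseteq \Spec S$, hence its image $Z = \pi(W)$, in codimension $\geq 2$; off $Z$ the quotient is a finite \'etale $G$-torsor, so modules of principal parts and hence the $D^n$ pull back; and reflexivity of the finite $R$-modules $(D^n_{S|k})^G$ and $D^n_{R|k}$ extends the isomorphism across $Z$. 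Two points that you flag or elide do go through, but should be written out. (i) The descent step, compatibly with the natural map: since the Reynolds operator $e = \frac{1}{|G|}\sum_{\gamma \in G} \gamma$ acts through the $S$-factor, one has $(S \otimes_R M)^G = e(S) \otimes_R M = M$ for every $R$-module $M$; hence over $U$ the map $\delta \mapsto 1 \otimes \delta$ is an isomorphism $D^n_U \xrightarrow{\sim} (\pi_* D^n_{\widetilde{U}})^G$, and since $1 \otimes \delta$ restricted to invariant functions returns $\delta$, the natural restriction map is its two-sided inverse --- note that this Reynolds argument is also what yields surjectivity over $U$, which bare restriction alone would not give. (ii) Reflexivity of $S$ as an $R$-module (hence of the direct summand $(D^n_{S|k})^G$ of $D^n_{S|k}$) needs a line of justification: either observe that going-down plus finiteness force every height-one prime of $S$ to contract to a height-one prime of $R$, so normality of $S$ gives the criterion $S = \bigcap_{\operatorname{ht}\, \mathfrak{p} = 1} S_{\mathfrak{p}}$, or invoke Hochster--Eagon to view $S$ as a maximal Cohen--Macaulay module over the normal Cohen--Macaulay ring $R$. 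You should also record the small observation that $\pi^{-1}(Z) = W$ because $W$ is $G$-stable, so the action really is free on all of $\pi^{-1}(U)$ and $\pi^{-1}(U)/G = U$. With these verifications inserted, your sketch is a complete, self-contained proof that could stand in place of the citation.
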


From this point onwards, we identify $D_{R|k}$ with the invariant subring $D_{S|k}^G$ of $D_{S|k}$.

\begin{example} \label{example-2-veronese}
	Let $G = \la \sigma| \sigma^2 = 1 \ra $ be a cyclic group of order 2, and we consider the linear action of $G$ on $S := k[s, t]$ given by $\sigma s= - s$, $\sigma t = -t$. Then the invariant subring $R := S^G$ consists of all polynomials that only have homogeneous components of even degree; that is, $R = k[s^2, st, t^2]$. The ring $D_{R|k}$ is then given by
	$$D_{R|k} = k \la s^2, st, t^2, s \partial_s, s \partial_t, t \partial_s, t \partial_t, \partial_s^2, \partial_s \partial_t, \partial_t^2 \ra.$$
	The standard transposition on $D_{S|k}$ (see Subsection \ref{subscn-antiauto-smooth}) preserves the subring $D_{R|k}$, and therefore it restricts to an involutive anti-automorphism of $D_{R|k}$ that fixes $R$.	
\end{example}

We show that, as we just observed in Example \ref{example-2-veronese}, the standard transposition on $D_{S|k}$ always preserves the subring $D_{R|k}$. 
\begin{theorem} \label{thm-quot-invol}
	Let $k$ be a field of characteristic zero, $G$ be a finite subgroup of $\GL_n(k)$ that contains no pseudoreflections, $S := k[x_1, \ds, x_n]$ be a polynomial ring over $k$ equipped with the standard linear action of $G$ and $R := S^G$ be the ring of $G$-invariants of $S$. Then the standard transposition on $D_{S|k}$ preserves the subring $D_{R|k}$, inducing an involutive anti-automorphism on $D_{R|k}$ that fixes the subring $R$. 
\end{theorem}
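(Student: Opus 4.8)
The plan is to show that the standard transposition $\tau$ on $D_{S|k}$, which we recall sends $x_i \mapsto x_i$ and $\partial_i \mapsto -\partial_i$, commutes with the $G$-action, so that it carries the invariant subring $D_{S|k}^G = D_{R|k}$ to itself. First I would set up the relevant structures explicitly: the $G$-action on $D_{S|k}$ is given by $(\gamma \cdot \xi) = \gamma \circ \xi \circ \gamma^{-1}$, where $\gamma$ acts linearly on $S = \Sym_k(k^n)$, and the standard transposition $\tau$ is the involutive anti-automorphism of Proposition \ref{prop-exists-involn-smoothcase}. The goal reduces to verifying that $\tau(\gamma \cdot \xi) = \gamma \cdot \tau(\xi)$ for all $\gamma \in G$ and $\xi \in D_{S|k}$, since then $\xi \in D_{S|k}^G$ implies $\tau(\xi) \in D_{S|k}^G$, giving the desired restriction; involutivity and the fixing of $R$ are then inherited automatically from the corresponding properties of $\tau$ on $D_{S|k}$.

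The key computational step is to check that $\tau$ and the action of each $\gamma \in \GL_n(k)$ commute on the algebra generators $x_1, \ds, x_n, \partial_1, \ds, \partial_n$, which generate $D_{S|k}$ as a $k$-algebra (Remark \ref{rmk-char-zero}). Since $\tau$ is an anti-automorphism and $\gamma \cdot (-)$ is an automorphism, and both fix $k$, it is enough to compare them on these generators. The action of $\gamma$ on the degree-one pieces is linear: writing $\gamma$ as a matrix, $\gamma$ permutes the $x_i$ by a linear substitution and acts on the $\partial_i$ by the contragredient (inverse-transpose) representation, because derivations transform dually to coordinates. The crucial observation is that $\tau$ acts as $+1$ on the span of the $x_i$ and as $-1$ on the span of the $\partial_i$, and both of these subspaces are $G$-stable; hence on each of these two $G$-stable subspaces $\tau$ is simply a scalar ($+1$ or $-1$), which manifestly commutes with any linear $G$-action. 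I would then propagate this agreement from generators to all of $D_{S|k}$ using that $\tau$ reverses products while $\gamma \cdot (-)$ preserves them, checking that the relation $\tau(\gamma \cdot \xi) = \gamma \cdot \tau(\xi)$ is preserved under the multiplication in $D_{S|k}$.

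The main obstacle I anticipate is the bookkeeping in the compatibility of two maps with opposite multiplicative behavior: $\tau$ is an anti-homomorphism while $\gamma \cdot (-)$ is a homomorphism, so one must be careful that the identity $\tau \circ (\gamma \cdot -) = (\gamma \cdot -) \circ \tau$ is genuinely multiplicative rather than only additive. Concretely, for a product $\xi \eta$ one has $\tau(\gamma \cdot (\xi\eta)) = \tau((\gamma\cdot\xi)(\gamma\cdot\eta)) = \tau(\gamma\cdot\eta)\,\tau(\gamma\cdot\xi)$, which must be matched against $(\gamma\cdot)(\tau(\xi\eta)) = (\gamma\cdot)(\tau(\eta)\tau(\xi)) = (\gamma\cdot\tau(\eta))(\gamma\cdot\tau(\xi))$; both reductions produce the same two factors in the same order, so the induction closes cleanly once the generator case is known. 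An alternative, more conceptual route would be to invoke uniqueness: by Proposition \ref{prop-exists-involn-smoothcase} the standard transposition is the unique involutive anti-automorphism of $D_{S|k}$ fixing $S$ and sending $\partial^{[\alpha]} \mapsto (-1)^{|\alpha|}\partial^{[\alpha]}$; since $\gamma \cdot \tau \cdot \gamma^{-1}$ is again an involutive anti-automorphism fixing $S$ (as $G$ acts $k$-linearly preserving the order filtration and the multi-degree) and acting by the same signs on the $\partial^{[\alpha]}$, uniqueness forces $\gamma \cdot \tau \cdot \gamma^{-1} = \tau$, i.e. $\tau$ is $G$-equivariant. Either way the conclusion is that $\tau$ preserves $D_{S|k}^G = D_{R|k}$ and restricts to an involutive anti-automorphism fixing $R$.
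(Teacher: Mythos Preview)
Your proposal is correct and follows essentially the same route as the paper: both arguments reduce to showing that the standard transposition is $G$-equivariant by checking on the generators $x_i$ and $\partial_i$, using that the $k$-spans of the $x_i$ and of the $\partial_i$ are each $G$-stable and that $\tau$ acts on them by the scalars $+1$ and $-1$ respectively. Your treatment is in fact slightly more careful than the paper's on the propagation step, explicitly verifying that the identity $\tau(\gamma\cdot\xi)=\gamma\cdot\tau(\xi)$ is closed under multiplication despite $\tau$ being an anti-homomorphism; the alternative uniqueness argument you sketch is a nice variant not present in the paper.
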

\begin{proof}
	We note that every ring automorphism of $D_{S|k}$ is also a ring automorphism of $D_{S|k}^{op}$, and therefore the action of $G$ on $D_{S|k}$ is also an action on $D_{S|k}^{op}$. We claim that the standard transposition $\Phi: D_{S|k} \to D_{S|k}^{op}$ is $G$-equivariant; since $D_{R|k}$ is identified with the ring of invariants $D_{S|k}^G$ this will prove the result.
	
	The claim states that for all $\xi \in D_{S|k}$ and all $\gamma \in G$ we have $\gamma \cdot \Phi(\xi) = \Phi(\gamma \cdot \xi)$. As with any action on an algebra, it suffices to check that this holds on algebra generators for $D_{S|k}$. For the generators $x_i$ we have:
	$$\gamma \cdot \Phi(x_i) = \gamma \cdot x_i = x_i = \Phi(x_i) = \Phi(\gamma \cdot x_i).$$
	For the partial derivatives $\partial_i$, we first observe that $\gamma \cdot \Phi(\partial_i) = \gamma \cdot (- \partial_i) = - \gamma \cdot \partial_i$, and therefore it suffices to show that $\Phi(\gamma \cdot \partial_i) = - (\gamma \cdot \partial_i)$. To prove this, we show that $\gamma \cdot \partial_i$ is in the $k$-span of the derivatives $\partial_1, \ds, \partial_n$, i.e. we show that $\gamma \cdot \partial_i \in k \{\partial_1, \ds, \partial_n\}$.
	
	We first note that $k \{\partial_1, \ds, \partial_n\}$ is precisely the set of all derivations $\theta$ with the property that $\theta(x_j) \in k$ for all $j$; indeed, an arbitrary derivation $\theta$ can be written as $\theta = \sum_j f_j \partial_j$ for some $f_j \in S$, and $f_j = \theta(x_j)$. 
	
	For all $f, g \in S$ we have
	\begin{align*}
	(\gamma \cdot \partial_i) (fg) & = \gamma \partial_i (\gamma^{-1}(fg)) \\
		& = \gamma \partial_i ((\gamma^{-1}f) (\gamma^{-1} g)) \\
		& = \gamma \bigg( (\gamma^{-1}f) \partial_i (\gamma^{-1} g) + (\gamma^{-1}g) \partial_i (\gamma^{-1}f) \bigg) \\
		& = f \gamma \big( \partial_i(\gamma^{-1}g) \big) + g \gamma \big( \partial_i (\gamma^{-1}f) \big) \\
		& = f (\gamma \cdot \partial_i) (g) + g (\gamma \cdot \partial_i) (f),
	\end{align*}
	and therefore $\gamma \cdot \partial_i$ is a derivation. Moreover, if we let $a_k \in k$ be such that $\gamma^{-1} x_j = \sum_k a_k x_k$ then
	$$(\gamma \cdot \partial_i)(x_j) = \gamma \big( \partial_i(\gamma^{-1} x_j) \big) = \gamma(a_i) = a_i \in k.$$
	We have therefore shown that $\gamma \cdot \partial_i \in k\{\partial_1, \ds, \partial_n\}$, which concludes the proof. 
\end{proof}
\begin{remark}
	Kantor's theorem (Theorem \ref{thm-Kantor}) remains true whenever $k$ has characteristic $p > 0$ and $p$ does not divide $|G|$ (see Jeffries' notes \cite[\S 4.2]{Jeffries-notes}). However, note that in the proof of Theorem \ref{thm-quot-invol} we use the fact that $D_{S|k}$ is generated by derivations, which crucially uses the fact that $k$ has characteristic zero. Note that the proof also uses the fact that the action of $G$ is linear. 
\end{remark}

\begin{remark}
	Even when $G$ is not finite, there is a map $D_{S|k}^G \to D_R$ which is surjective for some nice group actions \cite{Schwarz95}, and the only obstruction to generalizing Theorem \ref{thm-quot-invol} to other rings of invariants is whether this map is an isomorphism. 
\end{remark}

\bibliography{biblio}
\bibliographystyle{alpha}

\end{document}